\DeclareMathOperator{\Trace}{Trace}
\DeclareMathOperator{\Tr}{Tr}
\DeclareMathOperator{\Ric}{Ric}
\let\origsection\section
\renewcommand\section{\@ifstar{\starsection}{\nostarsection}}
\newcommand\nostarsection[1]
\sectionprelude\origsection{#1}\sectionpostlude}
\newcommand\starsection[1]
\newcommand\sectionprelude{  \vspace{1.2em}
}
\newcommand\sectionpostlude{  \vspace{0.3em}
}
\def\thm@space@setup{  \thm@preskip=2ex   \thm@postskip=\thm@preskip }
\theoremstyle{plain}
\newtheorem{algorithm}{Algorithm}[section]
\newtheorem{thm}{Thm}
\newtheorem{corollary}[algorithm]{Corollary}
\newtheorem{lemma}[algorithm]{Lemma}
\newtheorem{theorem} [algorithm] {Theorem}
\newtheorem{theoremlet}[thm]{Theorem}
\newtheorem{corollarylet}[thm]{Corollary}
\newtheorem{proposition}[algorithm]{Proposition}
\theoremstyle{definition}
\newtheorem*{acknowledgment}{Acknowledgment}
\newtheorem{definition}[algorithm]{Definition}
\newtheorem{remark}[algorithm]{Remark}
\newtheorem{example}[algorithm]{Example}
\newtheorem*{examplenonumt}{Example}
\newtheorem*{remarknonum}{Remark}
\newtheorem*{definitionnonum}{Definition}
\begin{document}
\title[Submanifold restrictions]{Restrictions on Submanifolds via Focal
Radius Bounds}
\author{Luis Guijarro}
\address{Department of Mathematics, Universidad Aut\'{o}noma de Madrid, and
ICMAT CSIC-UAM-UCM-UC3M }
\email{luis.guijarro@uam.es}
\urladdr{http://verso.mat.uam.es/~luis.guijarro/}
\thanks{The first author was supported by research grants MTM2011-22612,
MTM2014-57769-3-P, and MTM2017- 85934-C3-2-P from the MINECO, and by ICMAT
Severo Ochoa project SEV-2015-0554 (MINECO)}
\author{Frederick Wilhelm}
\address{Department of Mathematics\\
University of California\\
Riverside, CA 92521}
\email{fred@math.ucr.edu}
\urladdr{https://sites.google.com/site/frederickhwilhelmjr/home}
\thanks{This work was supported by a grant from the Simons Foundation
(\#358068, Frederick Wilhelm)}
\date{\today }
\subjclass[2000]{Primary 53C20}
\keywords{Focal Radius, Rigidity, Projective Space, Positive Curvature}

\begin{abstract}
We give an optimal estimate for the norm of any submanifold's second
fundamental form in terms of its focal radius and the lower sectional
curvature bound of the ambient manifold.


This is a special case of a similar theorem for intermediate Ricci
curvature, and leads to a $C^{1,\alpha}$ compactness result for
submanifolds, as well as a \textquotedblleft soul-type\textquotedblright\
structure theorem for manifolds with nonnegative $k^{th}$--intermediate
Ricci curvature that have a closed submanifold with dimension $\geq k$ and
infinite focal radius.

To prove these results, we use a new comparison lemma for Jacobi fields from 
\cite{GuijWilh} that exploits Wilking's transverse Jacobi equation. The new
comparison lemma also yields new information about group actions, Riemannian
submersions, and submetries, including generalizations to intermediate Ricci
curvature of results of Chen and Grove. None of these results can be
obtained with just classical Riccati comparison (see Subsection \ref%
{subsect: classic comp} for details.)
\end{abstract}

\maketitle

\pdfbookmark[1]{Introduction}{Introduction}


Submanifolds restrict the Riemannian geometry of the space in which they
lie, but only if they satisfy extra conditions. One constraint comes from
the tubular neighborhood theorem. It asserts that given any compact
submanifold $S$, there is a positive $r_{0}$ such that the normal disc
bundle $D_{r_{0}}(S)$ is diffeomorphic to an open neighborhood of $S$; the
diffeomorphism can be realized via the normal exponential map of $S$. This
motivates the notion of \emph{focal radius, }which is the maximum $r_{0}$
such that the normal exponential map is a local diffeomorphism of $%
D_{r_{0}}(S)$.


Our first result shows that we can  bound the norm of the second fundamental
form of any submanifold in terms of its focal radius and the ambient
manifold's lower curvature bound.

\begin{theoremlet}
\label{II estimate}
For $\kappa =-1,0,$ or $1,$ let $M$ be a complete
Riemannian $n$--manifold with sectional curvature $\geq \kappa ,$ and let $N$
be any submanifold of $M$ with $\dim \left( N\right) \geq 1.$ Then the
second fundamental form $\mathrm{II}_{N}$ of $N$ satisfies%
\begin{equation}
\begin{array}{ll}
\left\vert\, \mathrm{II}_{N}\right\vert \leq \cot \left( \mathrm{FocalRadius}%
\left( N\right) \right) & \text{if }\kappa =1,\vspace{0.07in} \\ 
\left\vert \,\mathrm{II}_{N}\right\vert \leq \dfrac{1}{\mathrm{FocalRadius}%
\left( N\right) } & \text{if }\kappa =0,\text{ and}\vspace{0.07in} \\ 
\left\vert \,\mathrm{II}_{N}\right\vert \leq \coth \left( \mathrm{FocalRadius%
}\left( N\right) \right) & \text{if }\kappa =-1.%
\end{array}
\label{II bound inequal}
\end{equation}

In particular, if $\kappa =0$ and the focal radius of $N$ is infinite, then $%
N$ is totally geodesic.
\end{theoremlet}

We emphasize that $N$ does not need to be closed or even complete. On the
other hand, if $M$ happens to be closed, the presence of a lower curvature
bound $\kappa $ is automatic, and after rescaling, we can take $\kappa $ to
be $-1,0,$ or $1.$ So for closed manifolds, Theorem \ref{II estimate} is
universal in the sense that it applies to any submanifold of any Riemannian
manifold. The upper bound is, moreover, optimal. Metric balls in space forms
show that for every $\kappa $ and every possible focal radius, there is a
hypersurface in a space with constant curvature $\kappa $ for which
Inequality (\ref{II bound inequal}) is an equality.


As a consequence of this result, we show that submanifolds with focal radius
bounded from below and diameter bounded from above have only finitely many
diffeomorphism types, a result that is of independent interest

\begin{theoremlet}
\label{finiteness thm}Let $M$ be a compact Riemannian manifold. Given $D,r>0$
the class $\mathcal{S}$ of closed Riemannian manifolds that can be
isometrically embedded into $M$ with focal radius $\geq r$ and intrinsic
diameter $\leq D$ is precompact in the $C^{1,\alpha }$--topology. In
particular, $\mathcal{S}$ contains only finitely many diffeomorphism types.
\end{theoremlet}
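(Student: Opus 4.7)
The plan is to reduce Theorem~A to the main second fundamental form estimate announced in the abstract, combined with a standard $C^{1,\alpha}$--compactness theorem for submanifolds of bounded extrinsic curvature. The focal radius hypothesis enters only in the first step; everything else is classical submanifold compactness theory.

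First, since $M$ is compact, its sectional curvature is bounded below by some constant $\kappa$. The paper's main second fundamental form estimate then yields a uniform bound $|II_S|\le K(r,\kappa)$ for every $S\in\mathcal{S}$, with $K$ depending only on $r$ and $\kappa$. This is the only place where the focal radius assumption is used.

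Second, I would use this bound together with the bounded geometry of $M$ to produce uniform local graphical representations of the submanifolds. Around each $p\in S$ the submanifold is the image under $\exp_p^M$ of the graph of a $C^{1,1}$ map from a ball in $T_pS$ into $(T_pS)^\perp$, with $C^{1,1}$--norm and radius of domain depending only on $K$ and the geometry of $M$. The Gauss equation applied to $|II_S|\le K$ and the curvature bounds on $M$ produces a two-sided intrinsic sectional curvature bound on $S$; combined with $\mathrm{diam}(S)\le D$ and Bishop--Gromov this gives uniform volume bounds for $S$, and in particular bounds the number of local graph patches needed to cover $S$.

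Third, I would extract $C^{1,\alpha}$--convergent subsequences of these local graphing maps by Arzel\`a--Ascoli, and glue them together by a standard diagonalization / center-of-mass argument to produce a limit submanifold $S_\infty\subset M$ of class $C^{1,\alpha}$. Since any two $C^{1,\alpha}$--close embedded submanifolds of $M$ are mutually diffeomorphic (one is realized as a normal graph of the other over its tubular neighborhood), $C^{1,\alpha}$ precompactness immediately yields only finitely many diffeomorphism types. I expect the main technical obstacle to be the uniform global $C^{1,\alpha}$ gluing of the local graphical limits and verifying that the limit is itself a closed embedded submanifold; the geometrically novel step---the second fundamental form bound---has been cleanly isolated as a standalone result that the remainder of the argument simply draws on.
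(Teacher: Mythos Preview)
Your proposal is correct and reaches the same conclusion, but the route differs from the paper's in a meaningful way. Both arguments begin identically: the compactness of $M$ gives a lower sectional curvature bound, and Theorem~B then yields $|\mathrm{II}_S| \le K$ uniformly over $\mathcal{S}$. From there the paper works \emph{intrinsically}: it combines the Gauss equation (to bound $|\mathrm{sec}_S|$) with the Heintze--Karcher tube formula $\mathrm{vol}(M) \le \mathrm{vol}(S)\cdot f_\delta(\mathrm{diam}(M),\Lambda)$ to extract a uniform \emph{lower} volume bound on $S$, feeds this into the Cheeger--Gromov compactness theorem to get an abstract $C^{1,\alpha}$ limit $(S_\infty, g_\infty)$, and only then pushes the embeddings $S_i \hookrightarrow M$ through via Arzela--Ascoli. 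Your approach stays \emph{extrinsic} throughout, working directly with local normal-graph parametrizations and gluing their $C^{1,\alpha}$ limits; this is the classical Langer-type submanifold compactness and sidesteps both Heintze--Karcher and Cheeger--Gromov as black boxes. The paper's route is shorter because it outsources the analysis to two off-the-shelf theorems, and the Heintze--Karcher step is an elegant way to get the volume lower bound for free from the $|\mathrm{II}|$ bound; your route is more self-contained but, as you correctly flag, the global gluing and the embeddedness of the limit require genuine care. One small correction: Bishop--Gromov by itself does not give a lower volume bound on $S$---it gives the upper bound and the covering-number bound, which is in fact all your graph argument needs---so if you ever want the lower volume bound explicitly (e.g.\ to invoke Cheeger's theorem as the paper does), Heintze--Karcher is the tool to reach for.
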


Theorem \ref{finiteness thm} is optimal in the sense that neither the
hypothesis on the focal radius nor the hypothesis on the diameter can be
removed. If either hypothesis is removed, then, after rescaling, all
Riemannian $k$--manifolds can occur in a flat $n$--torus, provided $n>>k$
(see example \ref{focal ex}).

Since Inequality \eqref{II bound inequal} applies to any submanifold of any
Riemannian manifold, the Gauss Equation implies that the class $\mathcal{S}$
in Theorem \ref{finiteness thm}, has uniformly bounded sectional curvature.
Theorem \ref{finiteness thm} follows from this and Cheeger's Finiteness
Theorem (\cite{Cheeg2}), provided the class $\mathcal{S}$ also has a uniform
lower bound for its volume. We achieve the lower volume bound as a
consequence of Heintze and Karcher's tube formula (\cite{HeKar}, see Lemma %
\ref{bnd geom lemma}, below).

Since Theorem \ref{II estimate} gives a new way to estimate curvature, it has many
corollaries. For example, using again the Gauss Equation, Theorem \ref{II estimate} provides
us with a simple proof of the following two statements, that are valid for
submanifolds of arbitrary codimension.

\begin{corollarylet}
\leavevmode\vspace{-0.4\baselineskip} 

\begin{itemize}
\item A submanifold of $\mathbb{S}^{n}$ is positively curved if its focal
radius is $>\frac{\pi }{4}.$

\item A submanifold of any hyperbolic manifold is nonpositively curved if it
has infinite focal radius.
\end{itemize}

\end{corollarylet}

The Clifford torus in $\mathbb{S}^{3}$ has focal radius $\frac{\pi }{4},$ so
the first statement of the corollary is optimal.

Theorem \ref{II estimate} is obtained as a consequence of a more general
bound on the second fundamental form of a submanifold, that is true in the
more general context of bounds on the intermediate Ricci curvature (see
Theorem \ref{shape est. thm} below). 
As a consequence, we recapture all of the rigidity of the Soul Theorem (\cite%
{CheegGrom}, \cite{Guij2},\cite{Perl}, \cite{Shar}, \cite{Wilk1}), provided
a manifold with $\Ric_{k}$ $\geq 0$ contains a closed submanifold with
infinite focal radius. (See \cite{GuijWilh} or \cite{Wu} for the definition
of intermediate Ricci curvature.)

\begin{theoremlet}
\label{inter ricci soul them}Let $M$ be a complete Riemannian $n$--manifold
with $\Ric_{k}$ $\geq 0,$ and let $N$ be any closed submanifold of $M$ with $%
\dim \left( N\right) \geq k$ and infinite focal radius. Then:$\vspace{0.1in}$

\noindent 1. $N$ is totally geodesic.$\vspace{0.1in}$

\noindent 2. The normal bundle $\nu \left( N\right) $ with the pull back
metric $\left( \exp _{N}^{\perp }\right) ^{\ast }\left( g\right) $ is a
complete manifold with $\Ric_{k}$ $\geq 0.\vspace{0.1in}$

\noindent 3. $\exp _{N}^{\perp }:\left( \nu \left( N\right) ,\left( \exp
_{N}^{\perp }\right) ^{\ast }g\right) \longrightarrow \left( M,g\right) $ is
a Riemannian cover.$\vspace{0.1in}$

\noindent 4. The zero section $N_{0}$ is totally geodesic in $\left( \nu
\left( N\right) ,\left( \exp _{N}^{\perp }\right) ^{\ast }\left( g\right)
\right) .\vspace{0.1in}$

\noindent 5. The projection $\pi :\left( \nu \left( N\right) ,\left( \exp
_{N}^{\perp }\right) ^{\ast }\left( g\right) \right) \longrightarrow N$ is a
Riemannian submersion.$\vspace{0.1in}$

\noindent 6. If $c:I\longrightarrow N$ is a unit speed geodesic in $N,$ and $%
V$ is a parallel normal field along $c,$ then 
\begin{equation*}
\Phi :I\times \mathbb{R}\longrightarrow M,\text{\hspace{0.6in}}\Phi \left(
s,t\right) =\exp _{c\left( s\right) }^{\perp }\left( tV\left( s\right)
\right)
\end{equation*}%
is a totally geodesic immersion whose image has constant curvature $0.%
\vspace{0.1in}$

\noindent 7. All radial sectional curvatures from $N$ are nonnegative. That
is, for $\gamma \left( t\right) =\exp _{N}^{\perp }\left( tv\right) $ with $%
v\in \nu \left( N\right) ,$ the curvature of any plane containing $\gamma
^{\prime }\left( t\right) $ is nonnegative.$\vspace{0.1in}$

\noindent 8. If $n\geq 3$ and $k\leq n-2,$ then for all $r>0,$ the intrinsic
metric on $\exp _{N}^{\perp }\left( S\left( N_{0},r\right) \right) $ has $%
\Ric_{k}\geq 0,$ where $S\left( N_{0},r\right) $ is the metric $r$--sphere
around the zero section $N_{0}$ in $\nu \left( N\right) .$
\end{theoremlet}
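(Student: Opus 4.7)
The plan is to reduce the entire package to a single Jacobi-field rigidity statement along normal geodesics of $N$, obtained from the intermediate Ricci comparison lemma of \cite{GuijWilh}. Part 1 is immediate from the intermediate Ricci version of Theorem \ref{II estimate} (namely Theorem \ref{shape est. thm}): $\mathrm{FocalRadius}(N)=\infty$ forces $|\mathrm{II}_N|=0$. The key further claim I would isolate is that for each unit $v\in\nu(N)_p$, every $N$-Jacobi field $J$ along $\gamma(t)=\exp_p(tv)$ is parallel, and the radial sectional curvatures along $\gamma$ in the planes spanned by $\gamma'$ and such $J$ are nonnegative — which gives part 7 directly. Because the focal radius is infinite, no such $J$ vanishes on $(0,\infty)$, so feeding any $k$-frame of $N$-Jacobi fields into the transverse Jacobi equation comparison of \cite{GuijWilh} and pushing the resulting inequality to equality by letting $t\to\infty$ against the $\kappa=0$ model forces parallelism and flatness of the relevant $2$-planes.

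With this rigidity in hand, $d\exp_N^\perp$ preserves norms and inner products of all tangent vectors to $\nu(N)$: radial fiber directions become unit geodesic tangents, while horizontal lifts of vectors in $TN$ become $N$-Jacobi fields of constant length along radial geodesics. Hence $\exp_N^\perp$ is a local isometry from the pullback metric. Completeness of this metric (each fiber is a Euclidean space of radial geodesics, and $N$ is closed) together with connectedness of $M$ then makes $\exp_N^\perp$ a Riemannian covering, which is part 3. Parts 2 and 4 follow immediately: $Ric_k\geq 0$ lifts through local isometries, and $N_0$ maps isometrically onto $N$, which is totally geodesic by part 1. For part 5, in the pullback metric the vertical fibers of $\pi$ are precisely the radial geodesic spaces, their orthogonal complements are the horizontal lifts of $TN$, and $\pi|_{N_0}$ is an isometry; hence $\pi$ is a Riemannian submersion.

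For part 6, along a unit speed geodesic $c\subset N$ and a parallel normal field $V$, the map $\Phi(s,t)=\exp_{c(s)}^\perp(tV(s))$ has $\partial_t\Phi$ a unit geodesic field, while $\partial_s\Phi$ restricts to an $N$-Jacobi field along each radial geodesic. The parallelism from the first paragraph forces $|\partial_s\Phi|$ and $\langle\partial_s\Phi,\partial_t\Phi\rangle$ to be constant in $t$, so $\Phi^{\ast}g$ is the flat product metric; $\Phi$ is a totally geodesic immersion because its image is ruled by radial geodesics of $M$ and by images of $c$ translated through parallel normal fields, both geodesic in $M$ by the same Jacobi-field analysis. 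For part 8, the Gauss equation applied to $\exp_N^\perp(S(N_0,r))$ does the work: in the pullback metric the shape operator of $S(N_0,r)$ is $(1/r)\,\mathrm{Id}$ on the vertical direction and vanishes on horizontal lifts of $TN$ (the parallel Jacobi fields from paragraph 1 have vanishing radial covariant derivative), so every Gauss correction $\langle \mathrm{II}X,X\rangle\langle \mathrm{II}Y,Y\rangle-\langle \mathrm{II}X,Y\rangle^{2}$ is nonnegative, and the $Ric_k\geq 0$ hypothesis transfers from the ambient space to the sphere.

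The principal obstacle is the Jacobi-field rigidity of the first paragraph. Classical Ricci curvature would not suffice: extracting \emph{parallelism} from the mere absence of focal points requires Wilking's transverse Jacobi equation to yield a Riccati-type ODE whose bounded-at-infinity behavior — guaranteed here by the infinite focal radius — pins down the zero solution. Verifying the equality case carefully, particularly for $N$-Jacobi fields whose initial data mixes tangential and normal components to $N$, is the most delicate step, and orchestrating this so that parallelism holds \emph{simultaneously} for a maximal family of Jacobi fields — not just one at a time — is what makes the many-sided conclusion of the theorem possible.
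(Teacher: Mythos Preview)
Your central rigidity claim is too strong and this cascades into gaps in Parts 7 and 8. It is \emph{not} true that every $N$--Jacobi field along a normal geodesic is parallel. In the paper's notation, the Lagrangian $\Lambda_N$ splits as $W\oplus\mathcal{V}$, where $W$ consists of fields with $J(0)\in T_pN$ and $\mathcal{V}$ of fields with $J(0)=0$. The comparison lemma (Part 3 of Lemma \ref{sing soon Ric_k Lemma}, applied with $\kappa=0$, $\tilde\lambda\equiv 0$, and $W_{t_0}=W$) forces only the $W$--fields to be parallel; the $\mathcal{V}$--fields need not be. Already for $M=\mathbb{R}^n$ and $N$ an affine subspace, the $\mathcal{V}$--fields grow linearly. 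So your first paragraph proves the splitting $\Lambda_N(t)=W(t)\oplus\mathcal{V}(t)$ with $W$ parallel, but nothing more.

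Consequently, Part 7 is not ``direct'': the parallel $W$--fields give $\sec(\gamma',J)=0$ for $J\in W$, but you still need $\sec(\gamma',V)\geq 0$ for $V\in\mathcal{V}$. The paper obtains this by taking $k-1$ parallel fields from $W$ together with one $V\in\mathcal{V}$ and invoking $Ric_k\geq 0$. Your proposal omits this step. For Part 8, your assertion that the shape operator of the distance tube is $(1/r)\,\mathrm{Id}$ on the vertical directions is false for the same reason---it would force the $\mathcal{V}$--fields to be linear. What is actually true (and what suffices for the Gauss--equation argument you sketch) is that the Riccati operator $S$ is positive semidefinite on $\mathcal{V}$ for all $t>0$; the paper proves this by a second application of the comparison lemma, arguing that if $\langle S(J),J\rangle<0$ for some $J\in\mathcal{V}$ at some $t_0>0$, one could augment $J$ by $k-1$ parallel $W$--fields to produce a $k$--frame violating the no-focal-point hypothesis. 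Once you have positive semidefiniteness, the Cauchy--Schwarz inequality for the bilinear form $\langle S\cdot,\cdot\rangle$ makes each Gauss correction nonnegative and your conclusion goes through.

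A minor point: in your second paragraph, that $\exp_N^\perp$ is a local isometry from the pullback metric is tautological once you know it has no critical points; the rigidity is not needed there. What the parallelism of $W$ actually buys is Part 5 (the submersion) and, via the splitting, the ingredients for Parts 6--8.
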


The version of Part 8 of Theorem \ref{inter ricci soul them} for nonnegative
sectional curvature and small $r$ is similar to Theorem 2.5 of \cite%
{GuijWals}. In the latter result, $N$ needs to be a soul of $M$ but can have
any focal radius.

In the case of Ricci curvature, Theorem \ref{inter ricci soul them} is
Theorem 3 of \cite{EschOSu}, but in the sectional curvature case, it yields
new information about open nonnegatively curved manifolds.

\begin{corollarylet}
\label{soul info cor}Let $N$ be a closed submanifold in a complete,
noncompact, simply connected nonnegatively curved manifold $\left(
M,g\right) .$ If $N$ has infinite focal radius, then $N$ is a soul of $M.$
\end{corollarylet}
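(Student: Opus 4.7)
My plan is to combine Theorem~\ref{inter ricci soul them} with simple connectivity to identify $M$ isometrically with the normal bundle of $N$ equipped with its pullback metric, and then verify the defining properties of a soul.

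First, since $\mathrm{sec}(M)\geq 0$ implies $\mathrm{Ric}_k(M)\geq 0$ for every $k\geq 1$, I would set $k=\dim N$ (when $\dim N\geq 1$; the zero-dimensional case reduces immediately to $M\cong\mathbb{R}^n$) and apply Theorem~\ref{inter ricci soul them}. Part~1 gives that $N$ is totally geodesic, and Part~3 gives that
\[
\exp_N^\perp:\bigl(\nu(N),(\exp_N^\perp)^{*}g\bigr)\longrightarrow (M,g)
\]
is a Riemannian cover. Because $M$ is simply connected, this cover has trivial deck group, hence is an isometric diffeomorphism. Therefore $M$ is isometric to $(\nu(N),(\exp_N^\perp)^{*}g)$ with $N$ identified with the zero section, and $M$ is diffeomorphic via $\exp_N^\perp$ to the total space of the normal bundle of $N$.

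It then remains to check that $N$ is totally convex in $M$; compactness is in the hypothesis and total geodesicity is Part~1. For this, I would let $\gamma:[0,\ell]\to M$ be any geodesic with $\gamma(0),\gamma(\ell)\in N$ and transfer it via the isometry above to a geodesic $\tilde\gamma$ in $\nu(N)$ joining two points of the zero section; then study $f(t)=|\tilde\gamma(t)|^{2}$, the squared fiber-norm. Using Parts~5--7 of Theorem~\ref{inter ricci soul them}---Part~5 gives a Riemannian submersion $\pi:\nu(N)\to N$, Part~6 supplies totally geodesic flat immersions $\Phi(s,t)=\exp_{c(s)}^{\perp}(tV(s))$ for every geodesic $c\subset N$ and parallel normal field $V$ along $c$, and Part~7 supplies nonnegative radial sectional curvatures---I would argue that the pullback metric has a parallel ``Euclidean bundle'' rigidity along the fiber direction, which forces $f$ to be a convex function of $t$. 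Since $f\geq 0$ with $f(0)=f(\ell)=0$, convexity then yields $f\equiv 0$, so $\tilde\gamma$ lies on the zero section and $\gamma\subset N$.

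The principal obstacle will be the convexity of $f$ along $\tilde\gamma$: one must distill from the qualitative conclusions of Theorem~\ref{inter ricci soul them} the precise ``product-like'' behavior of the pullback metric along fibers. Once that is in hand, $N$ is compact, totally geodesic, and totally convex, with $M$ diffeomorphic to its normal bundle via $\exp_N^{\perp}$---that is, $N$ fulfills the definition of a soul of $M$, completing the proof.
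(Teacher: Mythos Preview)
The paper does not write out a proof of Corollary~\ref{soul info cor}; it is stated as a direct consequence of Theorem~\ref{inter ricci soul them} together with the classical soul theory cited just before it. Your plan---apply Theorem~\ref{inter ricci soul them}, use simple connectivity to upgrade the Riemannian cover in Part~3 to an isometry $M\cong\bigl(\nu(N),(\exp_N^\perp)^{*}g\bigr)$, and then check that $N$ meets the definition of a soul---is correct, and your reduction of the problem to the total convexity of $N$ is exactly right.

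The genuine gap is in the step where you establish total convexity. You propose to deduce convexity of $f(t)=|\tilde\gamma(t)|^{2}=\mathrm{dist}(\gamma(t),N)^{2}$ from Parts~5--7 via an unspecified ``Euclidean bundle rigidity along the fiber direction,'' and you yourself flag this as the principal obstacle. Parts~5--7 do not by themselves yield what you need: the nonnegativity of the \emph{radial} sectional curvatures in Part~7 pushes the Riccati operator $S(t)$ of the distance tubes around $N$ \emph{downward} via $S'=-S^{2}-R$, so one cannot read off $S(t)\geq 0$ (that is, convexity of $\mathrm{dist}(\cdot,N)$) from Part~7 alone without also invoking the absence of focal points.

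There are two clean ways to close the gap. First, the very inequality you need, $\langle S(J),J\rangle\geq 0$ for all $J\in\Lambda_{N}$ and all $t\geq 0$, is established verbatim inside the proof of Part~8 of Theorem~\ref{inter ricci soul them} (via Lemma~\ref{sing soon Ric_k Lemma} and the infinite--focal--radius hypothesis); this says $\mathrm{Hess}\bigl(\mathrm{dist}(\cdot,N)\bigr)\geq 0$ on $M\setminus N$, and then any geodesic with endpoints on $N$ is forced to remain in $N$. Second, and more in the spirit of the Cheeger--Gromoll construction: once $\exp_{N}^{\perp}$ is a diffeomorphism, every unit normal geodesic $\gamma_{v}$ from $N$ is a ray, its Busemann function $b_{v}$ is convex since $\mathrm{sec}\geq 0$, one has $b_{v}|_{N}\leq 0$ and $b_{v}\bigl(\exp_{N}^{\perp}(tv)\bigr)=t>0$ for $t>0$, so $N=\{\sup_{v}b_{v}\leq 0\}$ is a sublevel set of a convex function and hence totally convex. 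Either argument completes your proof.
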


While examples show that souls need not have infinite focal radius, using
the main theorem of \cite{Guij}, we can always modify the metric of $M$ so
that its soul has infinite focal radius.

Theorem \ref{inter ricci soul them} also imposes rigidity on compact
nonnegatively curved manifolds that contain closed submanifolds with no
focal points (see Corollary \ref{inf focal nonneg cor}).

To prove Theorems \ref{II estimate} and \ref{inter ricci soul them} we use
the new Jacobi field comparison lemma from \cite{GuijWilh}. It also has
consequences for Riemannian submersions, isometric group actions, and
Riemannian foliations of manifolds with positive intermediate Ricci
curvature. To state them succinctly, we recall the definition of
\textquotedblleft manifold submetry\textquotedblright\ from \cite{ChenGrv}.

\begin{definitionnonum}
A submetry 
\begin{equation*}
\pi :M\longrightarrow X
\end{equation*}%
of a Riemannian manifold is called a \textquotedblleft manifold
submetry\textquotedblright\ if and only if $\pi ^{-1}\left( x\right) $ is a
closed smooth submanifold for all $x\in X$ and every geodesic of $M$ that is
initially perpendicular to a fiber of $\pi $ is everywhere perpendicular to
the fibers of $\pi .$
\end{definitionnonum}

If the leaves of a singular Riemannian foliation are closed, then as pointed
out in \cite{ChenGrv}, its quotient map is a manifold submetry. Thus the
following result applies to singular Riemannian foliations with closed
leaves. In particular, it applies to quotient maps of isometric group
actions and to Riemannian submersions. In it, we use the term
\textquotedblleft geodesic\textquotedblright\ to mean a curve that locally
minimizes distances but need not be a globally shortest path.

\begin{theoremlet}
\label{submetry cor}Let $\pi :M\longrightarrow X$ 
be a manifold submetry of a complete Riemannian $n$--manifold with $\Ric%
_{k}\geq k.$ Suppose that for some $x\in X,$ $\dim \pi ^{-1}\left( x\right)
\geq k.$

\begin{enumerate}
\item For every geodesic $\gamma $ emanating from $x$, either $\gamma $ does
not extend to a geodesic on any interval that properly contains $\left[ 
\frac{-\pi }{2},\frac{\pi }{2}\right] ,$ or $\gamma $ has a conjugate point
to $x$ in $\left[ \frac{-\pi }{2},\frac{\pi }{2}\right] .$ In particular, if 
$X$ is smooth and $\pi $ is a Riemannian submersion, then the conjugate
radius of $X$ at $x$ is $\leq \frac{\pi }{2}.$

\smallskip

\item If all geodesics emanating from $x$ extend to geodesics on $\left[ 
\frac{-\pi }{2},\frac{\pi }{2}\right] $ and are free of conjugate points on $%
\left( \frac{-\pi }{2},\frac{\pi }{2}\right) ,$ then $\pi ^{-1}\left(
x\right) $ is totally geodesic in $M,$ and the universal cover of $M$ is
isometric to the sphere or a projective space with the standard metrics.

\smallskip

\item If $\dim \pi ^{-1}\left( x\right) \geq k$ for some $x\in X$ for which $%
\max \left\{ \mathrm{dist}_{x}\right\} =\mathrm{diam}\left( X\right) , $
then the diameter of $X$ is $\leq \frac{\pi }{2}.$
\end{enumerate}



\end{theoremlet}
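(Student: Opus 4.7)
The three parts all reduce to applying the intermediate Ricci shape-operator estimate (Theorem \ref{shape est. thm}), the $k{=}1$, $\kappa{=}1$ case of which is Theorem \ref{II estimate}, to the fiber $N := \pi^{-1}(x)$, which by hypothesis has $\dim N \geq k$. The submetry hypothesis guarantees that every geodesic $\gamma$ in $X$ emanating from $x$ lifts, for each $p \in N$, to a horizontal geodesic $\tilde\gamma$ in $M$ with $\tilde\gamma(0) = p$, $\tilde\gamma'(0) \perp N$, and $\tilde\gamma$ perpendicular to every fiber. Horizontal Jacobi fields along $\tilde\gamma$ project to Jacobi fields along $\gamma$, so a standard computation identifies conjugate points of $x$ along $\gamma$ with focal points of $N$ along $\tilde\gamma$.

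\textbf{Parts 1 and 3.} For Part 1, suppose for contradiction that $\gamma$ extends strictly beyond $[-\tfrac{\pi}{2},\tfrac{\pi}{2}]$ with no conjugate point to $x$ in this closed interval. Then $\tilde\gamma$ (respectively its reverse) is focal-point-free for $N$ on $[0,\tfrac{\pi}{2}]$, so the first focal time in direction $\pm\tilde\gamma'(0)$ exceeds $\tfrac{\pi}{2}$; Theorem \ref{shape est. thm} then gives $|\mathrm{II}_N| \le \cot(\mathrm{FocalRadius}) < \cot(\tfrac{\pi}{2}) = 0$, which is absurd. In the Riemannian-submersion case, $X$ is smooth, conjugate points along $\gamma$ correspond to those along its horizontal lift, and the conclusion on the conjugate radius at $x$ is then immediate. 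Part 3 is the same contradiction in a different guise: fix a point $y$ with $d(x,y) = \mathrm{diam}(X)$ and a minimizing geodesic $\sigma\colon [0,D]\to X$ from $x$ to $y$; minimality guarantees $\sigma$ has no conjugate points on $(0,D)$, hence $\tilde\sigma$ has no focal points of $N$ there, so if $D > \tfrac{\pi}{2}$ the first focal time along $\tilde\sigma'(0)$ exceeds $\tfrac{\pi}{2}$ and we reach the same impossibility.

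\textbf{Part 2.} The stronger hypothesis that every geodesic from $x$ extends to $[-\tfrac{\pi}{2},\tfrac{\pi}{2}]$ and is conjugate-point-free on the open interval forces the focal radius of $N$ in every horizontal direction to equal exactly $\tfrac{\pi}{2}$; Theorem \ref{shape est. thm} then yields $\mathrm{II}_N \equiv 0$, so $N$ is totally geodesic. For the rigid classification I would extract the full equality case of Theorem \ref{shape est. thm}, which via Wilking's transverse Jacobi equation (as developed in \cite{GuijWilh}) forces the radial sectional curvatures along every horizontal $\tilde\gamma$ to equal $1$ on $[0,\tfrac{\pi}{2}]$. Consequently all horizontal geodesics of length exactly $\tfrac{\pi}{2}$ emanating from $N$ arrive at a common dual focal submanifold, exhibiting $M$ as a dual pair in the sense of Chen--Grove. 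Passing to the universal cover (which inherits both the submetry and the dual structure), and invoking the classification of such dual pairs, identifies $\widetilde M$ with a round sphere or a standard projective space.

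\textbf{Main obstacle.} Parts 1 and 3 are essentially immediate once the conjugate/focal correspondence is set up. The technical heart is the rigidity of Part 2: one must strengthen the equality case of the shape estimate from ``$N$ is totally geodesic'' to pointwise control of the entire radial curvature tensor along every horizontal geodesic, together with a global matching of focal loci at parameter $\tfrac{\pi}{2}$. This is precisely what Wilking's transverse Jacobi equation, in the form given in \cite{GuijWilh}, is designed to deliver; once it is in place, the classification of dual pairs completes the identification of $\widetilde M$.
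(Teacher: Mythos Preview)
Your outline has the right shape for Parts 1 and 3, but the phrase ``a standard computation identifies conjugate points of $x$ along $\gamma$ with focal points of $N$ along $\tilde\gamma$'' hides the main technical content of the proof. Two problems: first, $X$ is in general only an Alexandrov space, so there are no Jacobi fields along $\gamma$; the paper must introduce a notion of \emph{variational conjugate point} (Definition~\ref{variat conj dfn}) and prove separately that such points stop geodesics from minimizing (Proposition~\ref{conj not min prop}). Second, even in the Riemannian-submersion case the correspondence is not a bijection: a focal Jacobi field in $\Lambda_N$ need not be horizontal, so it need not project to a nontrivial Jacobi field in $X$. What the paper actually proves (Lemma~\ref{focal implies conj}) is one-directional and conditional: if the \emph{first} focal point along $\tilde\gamma$ lies strictly in the interior of the domain of $\gamma$, then it produces a variational conjugate point downstairs. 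The argument requires ruling out that the focal field is a holonomy field, which in turn needs the holonomy-diffeomorphism result of Proposition~\ref{Hol diffeos}. None of this is routine, and in Part 1 the paper still has to treat separately the borderline case where the first focal point sits exactly at $\pm\pi/2$.

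For Part 2 your route diverges from the paper's and is harder than necessary. The paper simply observes that the hypothesis forces (via Lemma~\ref{focal implies conj}) the focal radius of $N=\pi^{-1}(x)$ to be at least $\pi/2$, and then invokes Part 3 of Theorem~\ref{Intermeadiate Ricci Thm} from \cite{GuijWilh}, which already contains the full rigidity conclusion under $Ric_k\ge k$. Your proposed route through a dual focal submanifold and the Chen--Grove classification of dual pairs is carried out in \cite{ChenGrv} only for sectional curvature ($k=1$), so as written it does not cover the intermediate Ricci case; even for $k=1$ it reproves what Theorem~\ref{Intermeadiate Ricci Thm} already gives. For Part 3, your contradiction via Theorem~\ref{shape est. thm} (focal radius along the minimal direction exceeding $\pi/2$ forces $|\Sigma\langle S_v e_i,e_i\rangle|\le k\cot(\mathrm{FocalRadius})<0$) is in fact a bit slicker than the paper's argument, which instead uses that the directions from $\pi^{-1}(p)$ to $\pi^{-1}(q)$ form a $\pi/2$--net in the unit normal sphere to select some $\tilde v$ with $\mathrm{Trace}(S_{\tilde v}|_W)\le 0$ and then applies Lemma~\ref{sing soon Ric_k Lemma} directly. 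Either way, the step you must still supply is the focal--conjugate machinery above.
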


The relevant definition of conjugate points in length spaces is given in \ref%
{variat conj dfn}.

Projective spaces viewed as the bases of Hopf fibrations show that the
conjugate radius estimate in Part 1 is optimal. The conclusion about the
extendability of $\gamma $ is also optimal.

\begin{examplenonumt}
Let $SO\left( n\right) $ act reducibly on the unit sphere, $\mathbb{S}^{n},$
in the usual way, by cohomogeneity one. Let $x\in \mathbb{S}^{n}/SO\left(
n\right) $ be the orbit of the equator. The geodesic passing through $x$ at
time $0$ extends to $\left[ -\frac{\pi }{2},\frac{\pi }{2}\right] ,$ where
it is free of conjugate points, but it does not extend to any larger
interval.
\end{examplenonumt}

This example also shows that for Part 3 of Theorem $\ref{submetry cor},$ it
is not enough to know that $\dim \pi ^{-1}\left( x\right) \geq k$ for \emph{%
some} $x\in X;$ we must also assume that $x$ realizes the diameter of $X.$

The remainder of the paper is organized as follows. In Section \ref%
{notations Sections} we establish notations and conventions. In Section \ref%
{comp lemma}, we review the comparison lemma and focal radius theorems of 
\cite{GuijWilh}. Theorems \ref{II estimate} and \ref{inter ricci soul them}
are proven in Section \ref{focal and lower sect}. In Section \ref{submanf
finitieness secti}, we prove Theorem \ref{finiteness thm}, provide examples
that show it is optimal, and state another finiteness theorem whose proof is
essentially the same as the proof of Theorem \ref{finiteness thm}. The paper
concludes with Section \ref{subm and conj sect} where we prove Theorem \ref%
{submetry cor} and state some of its corollaries for isometric groups
actions.

\begin{remarknonum}
To keep the exposition simple, we have stated all of our results with the
global hypothesis $\Ric_{k}$ $M\geq k\cdot \kappa ;$ however, most of them
also hold with only the corresponding hypothesis about radial intermediate
Ricci curvatures. That is, for any geodesic $\gamma $ that leaves our
submanifold orthogonally at time $0,$ we only need 
\begin{equation*}
\displaystyle\sum\limits_{i=1}^{k}\mathrm{sec}\left( \dot{\gamma}%
,E_{i}\right) \geq k\cdot \kappa
\end{equation*}%
for any orthonormal set $\left\{ \dot{\gamma},E_{1},\ldots ,E_{k}\right\} .$
This remark applies to Theorems \ref{II estimate}, \ref{inter ricci soul
them}, and $\ref{submetry cor}$, except for Part 2 of Theorem $\ref{submetry
cor}$ for which our proof still requires the global hypothesis.
\end{remarknonum}

\begin{acknowledgment}
We are grateful to Xiaoyang Chen for raising the question of whether Theorem
A in \cite{ChenGrv} might hold for Ricci curvature, to Ravi Shankar and
Christina Sormani for correspondence about conjugate points in length
spaces, to Pedro Sol\'{o}rzano for conversations about submetries and
holonomy, and to Karsten Grove for useful comments. Special thanks go to
Universidad Autonoma de Madrid for hosting a stay by the second author
during which this work was initiated.
\end{acknowledgment}

\section{Notations and Conventions\label{notations Sections}}

Let $\gamma :\left( -\infty ,\infty \right) \longrightarrow M$ be a unit
speed geodesic in a complete Riemannian $n$--manifold $M.$ Call an $\left(
n-1\right) $--dimensional subspace $\Lambda $ of normal Jacobi fields along $%
\gamma ,$ \emph{Lagrangian, }if the restriction of the Riccati operator to $%
\Lambda $ is self adjoint, that is, if 
\begin{equation*}
\left\langle J_{1}\left( t\right) ,J_{2}^{\prime }\left( t\right)
\right\rangle =\left\langle J_{1}^{\prime }\left( t\right) ,J_{2}\left(
t\right) \right\rangle
\end{equation*}%
for all $t$ and for all $J_{1},J_{2}\in \Lambda $.

For a subspace $V\subset \Lambda $ we write 
\begin{equation}
V\left( t\right) \equiv \left\{ \left. J\left( t\right) \text{ }\right\vert 
\text{ }J\in V\right\} \oplus \left\{ \left. J^{\prime }\left( t\right) 
\text{ }\right\vert \text{ }J\in V\text{ and }J\left( t\right) =0\,\right\} .
\label{dfn of eval eqn}
\end{equation}

Given a submanifold $N$ of the Riemannian manifold $M,$ we let $\nu \left(
N\right) $ be its normal bundle$.$ We use $\pi $ for the projection of $\nu
\left( N\right) $ onto $N$, and $N_{0}$ for the $0$--section of $\nu \left(
N\right) .$ If $\gamma $ is a geodesic with $\gamma ^{\prime }\left(
0\right) \perp N,$ we consider variations of $\gamma $ by geodesics that
leave $N$ orthogonally at time $0.$ We let $\Lambda _{N}$ be the the
corresponding variations fields; note that $\Lambda _{N}$ is Lagrangian.
Lemma 4.1 on page 227 of \cite{doCarm} says that $\Lambda _{N}$ is the set
of normal Jacobi fields $J$ given by:%
\begin{equation}
\Lambda _{N}\equiv \left\{ J|J\left( 0\right) =0\text{, }J^{\prime }\left(
0\right) \in \nu _{\gamma \left( 0\right) }\left( N\right) \right\} \oplus
\left\{ J|J\left( 0\right) \in T_{\gamma \left( 0\right) }N\text{ and }{%
J^{\prime }}\left( 0\right) =\mathrm{S}_{\gamma ^{\prime }\left( 0\right)
}J\left( 0\right) \right\} ,  \label{dfn of Lambda_N}
\end{equation}%
where $\mathrm{S}_{\gamma ^{\prime }\left( 0\right) }$ is the shape operator
of $N$ in the direction of $\gamma ^{\prime }\left( 0\right) ,$ that is, 
\begin{eqnarray*}
\mathrm{S}_{\gamma ^{\prime }\left( 0\right) } &:&T_{\gamma \left( 0\right)
}N\longrightarrow T_{\gamma \left( 0\right) }N\text{ is} \\
\mathrm{S}_{\gamma ^{\prime }\left( 0\right) } &:&w\longmapsto \left( \nabla
_{w}\gamma ^{\prime }\left( 0\right) \right) ^{TN}.
\end{eqnarray*}

For every $t\in \mathbb{R}$, we let $\mathcal{E}_{t}:\Lambda \longrightarrow
T_{\gamma \left( t\right) }M$, be the evaluation map $\mathcal{E}_{t}\left(
J\right) =J\left( t\right) $. Unless otherwise indicated, we suppose that $%
\mathcal{E}_{t}$ is injective on $\left( t_{0},t_{\max }\right) .$ When this
occurs, we say that $\Lambda $ is nonsingular on $\left( t_{0},t_{\max
}\right) .$

Geodesics are parameterized by arc length, except if we say otherwise. $%
\gamma _{v}$ will be the unique geodesic tangent to $v$ at time $0.$

Finally, we use $\mathrm{sec}$ to denote sectional curvature.

\section{The Comparison Lemmas and their Consequences\label{comp lemma}}

To prove Theorems \ref{II estimate} and \ref{inter ricci soul them} we
exploit the new Jacobi field comparison lemmas from \cite{GuijWilh}. We
review these here, and refer the reader to \cite{GuijWilh} for a full
exposition. 

Lagrangian subspaces in $2$--dimensional constant curvature spaces are
spanned by single Jacobi fields of the form $\tilde{f}E,$ where $E$ is a
parallel field. After rescaling the metric, $\tilde{f}$ is one of the
following 
\begin{equation}
\tilde{f}\left( t\right) =\left\{ 
\begin{array}{ll}
\left( c_{1}\sin t+c_{2}\cos t\right) & \text{if }\kappa =1,\vspace{0.05in}
\\ 
\left( c_{1}t+c_{2}\right) & \text{if }\kappa =0,\vspace{0.05in} \\ 
\left( c_{1}\sinh t+c_{2}\cosh t\right) & \text{if }\kappa =-1,%
\end{array}%
\right.  \label{model Jacobi}
\end{equation}%
for a choice of $c_{1},c_{2}\in \mathbb{R}$.

For a subspace $W\subset \Lambda $, write 
\begin{equation*}
W\left( t\right) =\left\{ \left. J\left( t\right) \text{ }\right\vert \text{ 
}J\in W\right\} \oplus \left\{ \left. J^{\prime }\left( t\right) \text{ }%
\right\vert \text{ }J\in W\text{ and }J\left( t\right) =0\right\} ,
\end{equation*}%
and 
\begin{equation*}
P_{W,t}:\Lambda \left( t\right) \longrightarrow W\left( t\right)
\end{equation*}%
for orthogonal projection. If $S$ is the Riccati operator associated to $%
\Lambda ,$ then to abbreviate we write 
\begin{equation*}
\Trace S_t |_{W} \text{ for }\mathrm{Trace}\left( P_{W,t}\circ S_t
|_{W}\right) .
\end{equation*}
Finally, recall that a subspace $\mathcal{V}$ of $\Lambda$ has full index on
an interval $I$ if it contains any Jacobi field in $\Lambda$ that vanishes
at some point of $I$.

We can now state the comparison lemmas from \cite{GuijWilh} that we use here.


\begin{lemma}[Intermediate Ricci Comparison]
\label{sing soon Ric_k Lemma} For $\kappa =-1,0,$ or $1,$ let $\gamma
:\left( -\infty ,\infty \right) \longrightarrow M$ be a unit speed geodesic
in a complete Riemannian $n$--manifold $M$ with $\newline
Ric_{k}$ $\geq k\cdot \kappa .$ Let $\tilde{\lambda}_{\kappa }:\left[
t_{0},t_{\max }\right) \longrightarrow \mathbb{R}$ be any solution of the
scalar Riccati equation 
\begin{equation}
\tilde{\lambda}_{\kappa }^{\prime }+\tilde{\lambda}_{\kappa }^{2}+\kappa =0%
\text{.}  \label{const curv ODE}
\end{equation}%
Let $\Lambda $ be a Lagrangian subspace of normal Jacobi fields along $%
\gamma $ with Riccati operator $S$, and let $W_{t_{0}}\perp \gamma ^{\prime
}(t_{0})$ be some $k$--dimensional subspace such that 
\begin{equation}
\Trace S_{t_{0}}|_{W_{t_{0}}}\leq k\cdot \tilde{\lambda}_{\kappa }\left(
t_{0}\right) .  \label{small initial}
\end{equation}%
Denote by $\mathcal{V}$ the subspace of $\Lambda $ formed by those Jacobi
fields that are orthogonal to $W_{t_{0}}$ at $t_{0}$ and by $H(t)$ the
subspace of $\gamma ^{\prime }(t)^{\perp }$ that is orthogonal to $\mathcal{V%
}(t)$ at each $t\in (t_{0},t_{max})$. 
Assume that $\mathcal{V}$ is of full index in the interval $[t_{0},t_{max})$.

Then for all $t\in \left[ t_{0},t_{\max }\right)$, 
\begin{equation}
\Trace S_t|_{H(t)} \leq k\cdot \tilde{\lambda}_{\kappa }\left( t\right) .
\label{small future Inequal}
\end{equation}

Moreover, if $\lim_{t\rightarrow t_{\max }^{-}}\tilde{\lambda}_{\kappa
}\left( t\right) =-\infty $ then the Jacobi equation splits orthogonally
along $\gamma$ in the interval $[t_0,t_{max})$ as 
\begin{equation*}
\Lambda=\mathcal{V}\oplus \mathcal{H}
\end{equation*}
where every nonzero Jacobi field $J\in\mathcal{H}$ is equal to $J=\tilde{f}%
\cdot E, $ 
where $E$ is a unit parallel field with $E(t_0)\in W_{t_0}$, and $\tilde{f}$
is the function from \eqref{model Jacobi} %
that satisfies $\tilde{f}\left( t_{0}\right) =\left\vert J\left(
t_{0}\right) \right\vert.$
\end{lemma}

\begin{lemma}
\label{lem:riccati comparison nnc} Let $\gamma :[t_0 ,\infty)
\longrightarrow M $ be a unit speed geodesic in a complete Riemannian $n$%
--manifold $M$ with $\newline
Ric_{k} \geq 0$. Let $\Lambda $ be a Lagrangian subspace of normal Jacobi
fields along $\gamma $ with Riccati operator $S.$ Suppose that for some $k$%
--dimensional subspace $W_{t_{0}}\perp \gamma^{\prime }(t_0) $ , 
\begin{equation}
\Trace S_{t_{0}}|_{W_{t_0}} \leq 0.  \label{small initial_nnc}
\end{equation}
With $\mathcal{V}$ and $H(t)$ as in Lemma \ref{sing soon Ric_k Lemma}, the
Jacobi equation splits orthogonally along $\gamma$ in the interval $%
[t_0,\infty)$ as 
\begin{equation*}
\Lambda=\mathcal{V}\oplus \mathcal{H}
\end{equation*}
where every nonzero Jacobi field $J\in\mathcal{H}$ is equal to $J=\tilde{f}%
\cdot E, $ 
where $E$ is a unit parallel field with $E(t_0)\in W_{t_0}$, and $\tilde{f}$
is the function from \eqref{model Jacobi} %
that satisfies $\tilde{f}\left( t_{0}\right) =\left\vert J\left(
t_{0}\right) \right\vert.$
\end{lemma}

We also need the focal radius theorem from \cite{GuijWilh}.

\begin{theorem}
\label{Intermeadiate Ricci Thm}For $k\geq 1,$ suppose that $M$ is a complete
Riemannian $n$--manifold with $\Ric_{k}\geq k$ and $N$ is any submanifold of 
$M$ with $\dim \left( N\right) \geq k.\vspace{0.1in}$

\noindent 1. Counting multiplicities, every unit speed geodesic $\gamma $
that leaves $N$ orthogonally at time $0$ has at least\ $\dim \left( N\right)
-k+1$ focal points for $N$ in $\left[ -\frac{\pi }{2},\frac{\pi }{2}\right]
. $ In particular, the focal radius of $N$ is $\leq \frac{\pi }{2}.\vspace{%
0.1in}$

\noindent 2. If $N$ has focal radius $\frac{\pi }{2},$ then it is totally
geodesic.$\vspace{0.1in}$

\noindent 3. If $N$ is closed and has focal radius $\frac{\pi }{2},$ then
the universal cover of $M$ is isometric to the sphere or a projective space
with the standard metrics, and $N$ is totally geodesic in $M.$
\end{theorem}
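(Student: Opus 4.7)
The plan is to apply the Intermediate Ricci Comparison Lemma (Lemma \ref{sing soon Ric_k Lemma}) with $\kappa = 1$ to the tangential Lagrangian subspace $W^{\mathrm{tan}} \subset \Lambda_N$ given by the right-hand summand of (\ref{dfn of Lambda_N}), which has dimension $\dim N \geq k$. For Part 1, I fix a unit-speed geodesic $\gamma$ leaving $N$ orthogonally at time $0$ and pick any $k$-dim subspace $W \subset W^{\mathrm{tan}}$. Because $J'(0) = \mathrm{S}_{\gamma'(0)} J(0)$ for $J \in W^{\mathrm{tan}}$, as $t_0 \to 0^+$ the quantity $\mathrm{Trace}(S(t_0)|_{W(t_0)})$ converges to the finite number $a := \mathrm{Trace}(\mathrm{S}_{\gamma'(0)}|_{W(0)})$. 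Taking the model solution $\tilde\lambda_1(t) = \cot(t - c)$ of (\ref{const curv ODE}) with $\tilde\lambda_1(0) = a/k$, which blows up to $-\infty$ at $t^{*} = c + \pi$, the hypothesis (\ref{small initial}) is satisfied for $t_0$ small, so Lemma \ref{sing soon Ric_k Lemma} forces $\Lambda_N$ to become singular somewhere in $(t_0, t^{*}]$, producing a focal point. Running the same argument on the reversed geodesic reverses the sign of $a$, so either the forward or the backward direction places a focal point inside $[-\pi/2, \pi/2]$. To upgrade to the multiplicity bound $\dim N - k + 1$, I iterate: after locating a focal point at $t_1$ at which a tangential Jacobi field vanishes with multiplicity $\mu_1$, replace $W$ by a $k$-dim subspace of the complementary part of $W^{\mathrm{tan}}$ and repeat, continuing until the remaining dimension drops below $k$. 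This yields a total multiplicity $\geq \dim N - k + 1$ of tangential focal points, which in turn lower-bounds the total multiplicity of focal points of $N$ in $[-\pi/2, \pi/2]$.

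For Part 2, if the focal radius equals $\pi/2$, then the above inequality must be saturated for every $\gamma \perp N$ and every $k$-dim $W \subset W^{\mathrm{tan}}$, which forces $a = 0$ (combining forward and backward). Part 2 of Lemma \ref{sing soon Ric_k Lemma} applied with $\tilde\lambda_1(t) = \cot(t + \pi/2)$, which blows up to $-\infty$ at $t_{\max} = \pi/2$, then forces every $J \in W$ to take the rigid form $J = \tilde f \cdot E$ with $E$ parallel along $\gamma$ and $\tilde f(t) = c_1 \sin t + c_2 \cos t$. The vanishing $J(\pi/2) = 0$ at the focal point yields $c_1 = 0$ and hence $J'(0) = 0$; combined with $J'(0) = \mathrm{S}_{\gamma'(0)} J(0)$ from (\ref{dfn of Lambda_N}), this forces $\mathrm{S}_{\gamma'(0)} J(0) = 0$. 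Letting $W$ range over all $k$-dim subspaces of $T_p N$, $\gamma'(0)$ over $\nu_p N$, and $p$ over $N$ yields $\mathrm{II}_N \equiv 0$, so $N$ is totally geodesic. For Part 3, the same rigid form $J = \tilde f \cdot E$ additionally forces $\mathrm{sec}(\dot\gamma, E) = 1$ for every such $E$, so every radial sectional curvature from the closed submanifold $N$ equals $1$ throughout the tubular neighborhood of radius $\pi/2$. With $N$ closed, this covers a large open subset of $M$ by constant curvature $1$, and a classical argument in the spirit of Cheng's maximal diameter theorem together with the rigidity of compact rank-one symmetric spaces then identifies the universal cover of $M$ with a standard sphere or projective space.

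The main obstacle I anticipate is making the iteration in Part 1 rigorous: after a focal point, the Riccati operator of $\Lambda_N$ re-emerges finite but with initial conditions that depend delicately on the geometry of $W^{\mathrm{tan}}$ near the singularity, and one has to verify that the complementary $k$-dim subspace really satisfies the starting hypothesis (\ref{small initial}) so that the comparison can be restarted. A secondary, more technical obstacle arises in Part 3: translating the pointwise constant-curvature data along each normal geodesic into the global CROSS classification requires additional global input, most likely through the transverse Jacobi framework referenced in the introduction.
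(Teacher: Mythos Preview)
First, note that this paper does \emph{not} prove Theorem~\ref{Intermeadiate Ricci Thm}; it is quoted from the companion paper \cite{GuijWilh} (see the sentence introducing it in Section~\ref{comp lemma}) and used here only as input. So there is no in-paper proof to compare against. Your approach to the basic bound $\mathrm{FocalRadius}(N)\le\pi/2$ and to Part~2 is sound and matches how the present paper deploys Lemma~\ref{sing soon Ric_k Lemma} elsewhere (cf.\ the proof of Theorem~\ref{shape est. thm}); one small slip is that you assert $J(\pi/2)=0$ before establishing $c_1=0$, but this is easily repaired by noting that the rigid form $J=\tilde f\cdot E$ forces every $J(0)$ to be an eigenvector of $\mathrm{S}_{\gamma'(0)}$ with nonnegative eigenvalue, whence $a=0$ makes that eigenvalue zero.

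There are, however, two genuine gaps. The first you already flag: the iteration meant to produce $\dim N-k+1$ focal points does not work as written, because a single application of Lemma~\ref{sing soon Ric_k Lemma} only locates \emph{some} singularity of $\Lambda_N$, not one arising from your chosen $W$, so there is no well-defined ``complementary part'' of $W^{\mathrm{tan}}$ on which to restart the comparison with a usable initial inequality. The second gap, in Part~3, is more serious: your rigidity step correctly gives $\mathrm{sec}(\dot\gamma,E)=1$ for parallel $E$ with $E(0)\in T_pN$, but the leap to ``\emph{every} radial sectional curvature equals~$1$'' and thence to constant curvature~$1$ on an open set is false and would, if it held, exclude the projective spaces from the conclusion. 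Indeed, in $\mathbb{CP}^n$ with the Fubini--Study metric normalized so that $\mathrm{sec}\in[1,4]$, a linear $\mathbb{CP}^{n-1}$ is closed, totally geodesic, and has focal radius $\pi/2$, yet the radial holomorphic planes have curvature~$4$. Part~3 therefore cannot proceed via a constant-curvature or Cheng-type argument; the CROSS classification in \cite{GuijWilh} requires controlling the Jacobi fields in the normal summand of $\Lambda_N$ as well, through the transverse Jacobi machinery alluded to in the introduction.
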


\section{Second Fundamental Form, Focal Radius, and Lower Curvature Bounds 
\label{focal and lower sect}}

In this section, we prove Theorems \ref{II estimate} and \ref{inter ricci
soul them}. The first is a special case of the following result.

\begin{theorem}
\label{shape est. thm}For $\kappa =-1,0,$ or $1,$ let $M$ be a complete
Riemannian $n$--manifold with $\Ric_{k}$ $\geq k\kappa ,$ and let $N$ be any
submanifold of $M$ with $\dim \left( N\right) \geq k.$ Then for any unit
normal vector $v$ to $N,$ the shape operator of $N$ for $v$ satisfies 
\begin{equation}
\begin{array}{ll}
\left\vert\, \displaystyle\sum\limits_{i=1}^{k}\left\langle \mathrm{S}%
_{v}\left( e_{i}\right) ,e_{i}\right\rangle\, \right\vert \leq k\cot \left( 
\mathrm{FocalRadius}\left( N,\gamma _{v}\right) \right) & \text{if }\kappa
=1,\vspace{0.07in} \\ 
\left\vert\, \displaystyle\sum\limits_{i=1}^{k}\left\langle \mathrm{S}%
_{v}\left( e_{i}\right) ,e_{i}\right\rangle\, \right\vert \leq \dfrac{k}{%
\mathrm{FocalRadius}\left( N,\gamma _{v}\right) } & \text{if }\kappa =0,%
\vspace{0.07in} \\ 
\left\vert \,\displaystyle\sum\limits_{i=1}^{k}\left\langle \mathrm{S}%
_{v}\left( e_{i}\right) ,e_{i}\right\rangle \,\right\vert \leq k\coth \left( 
\mathrm{FocalRadius}\left( N,\gamma _{v}\right) \right) & \text{if }\kappa
=-1,%
\end{array}
\label{trace of  S}
\end{equation}%
where $\left\{ e_{i}\right\} _{i=1}^{k}\subset T_{\gamma _{v}\left( 0\right)
}N$ is any orthonormal set and $\mathrm{FocalRadius}\left( N,\gamma
_{v}\right) $ is the focal radius of $N$ along $\gamma _{v}.$
\end{theorem}

\begin{proof}
We set%
\begin{equation*}
\mathrm{ct}_{\kappa }\left( t\right) =\left\{ 
\begin{array}{ll}
\cot \left( t\right) & \text{if }\kappa =1,\vspace{0.07in} \\ 
{1}/{t} & \text{if }\kappa =0,\vspace{0.07in} \\ 
\coth \left( t\right) & \text{if }\kappa =-1.%
\end{array}%
\right.
\end{equation*}%
Then $\mathrm{ct}_{\kappa }$ is an odd function that satisfies $%
\lim_{t\rightarrow 0^{-}}\mathrm{ct}_{\kappa }=-\infty$. 

Let $\Lambda _{N}$ be the Lagrangian family along $\gamma _{v}$ from
Equation \eqref{dfn of Lambda_N}. Let $S$ be the corresponding Riccati
operator. Observe that at $t=0,$ the restriction of $S$ to the second
summand in (\ref{dfn of Lambda_N}) coincides with the shape operator $S_{v}$
of $N$.

If the conclusion is false, there are $\left\{ J_{1},\ldots ,J_{k}\right\}
\subset \Lambda _{N}$ with $\left\{ J_{i}\left( 0\right) \right\} _{i=1}^{k}$
orthonormal and tangent to $N$ so that 
\begin{eqnarray*}
J_{i}^{\prime }\left( 0\right) ^{T} &=&S\left( J_{i}\left( 0\right) \right) 
\text{ and } \\
\left\vert \Sigma _{i=1}^{k}\left\langle S\left( J_{i}\left( 0\right)
\right) ,J_{i}\left( 0\right) \right\rangle \right\vert &\geq &k\cdot 
\mathrm{ct}_{\kappa }\left( \mathrm{FocalRadius}\left( N\right) -\alpha
\right) ,
\end{eqnarray*}%
for some $\alpha \in \left( 0,\mathrm{FocalRadius}\left( N\right) \right) .$
After possibly replacing $v$ with $-v$, we may assume that 
\begin{eqnarray}
\Sigma _{i=1}^{k}\left\langle S\left( J_{i}\right) ,J_{i}\right\rangle |_{0}
&\leq &-k\cdot \mathrm{ct}_{\kappa }\left( \mathrm{FocalRadius}\left(
N\right) -\alpha \right)  \notag \\
&=&k\cdot \mathrm{ct}_{\kappa }\left( \alpha -\mathrm{FocalRadius}\left(
N\right) \right) ,\text{ since }\mathrm{ct}_{\kappa }\text{ is an odd
function}  \notag \\
&<&0.  \label{bad trace}
\end{eqnarray}

We apply Lemma \ref{sing soon Ric_k Lemma} with $\Lambda =\Lambda _{N}$ and $%
W_{t_{0}}=\mathrm{span}\left\{ J_{i}\left( 0\right) \right\} .$ To see that
the hypotheses of Lemma \ref{sing soon Ric_k Lemma} are satisfied, we note
that:

\begin{itemize}
\item Inequality (\ref{bad trace}) gives us that Inequality (\ref{small
initial}) holds with 
\begin{equation*}
\tilde{\lambda}_{\kappa }\left( t\right) =\mathrm{ct}_{\kappa }\left( \alpha
-\mathrm{FocalRadius}\left( N\right) +t\right) \text{ and }t_{0}=0.
\end{equation*}

\item Since $\Lambda _{N}$ is nonsingular on $\left( 0,\mathrm{FocalRadius}%
\left( N\right) \right) ,$ its subspace $\mathcal{V}$ has full index on the
interval $\left( 0,\mathrm{FocalRadius}\left( N\right) \right) .$
\end{itemize}

Thus it follows from Lemma \ref{sing soon Ric_k Lemma} that for all $%
t_{1}\in \left( 0,\mathrm{FocalRadius}\left( N\right) \right) $ there is a $%
k $--dimensional subspace 
$H(t_1)\subset \gamma^{\prime }(t_1)$  so that 
\begin{equation*}
\Tr S|_{H(t_1)} \leq k\cdot \mathrm{ct}_{\kappa }\left( \alpha -\mathrm{%
FocalRadius}\left( N\right) +t_{1}\right) .
\end{equation*}%
Since $\alpha -\mathrm{FocalRadius}\left( N\right) <0$ and $%
\lim_{t\rightarrow 0^{-}}\mathrm{ct}_{\kappa }=-\infty ,$ $\Lambda _{N}$ has
a singularity by time $\mathrm{FocalRadius}\left( N\right) -\alpha .$ This
is a contradiction because $\Lambda _{N}$ is nonsingular on the interval $%
\left( 0,\mathrm{FocalRadius}\left( N\right) \right) $.
\end{proof}

\begin{proof}[Proof of Theorem \protect\ref{inter ricci soul them}]
Let $M$ be a complete Riemannian $n$--manifold with $\Ric_{k}$ $\geq 0,$ and
let $N$ be any closed submanifold of $M$ with $\dim \left( N\right) \geq k$
and infinite focal radius. Let $v$ be any unit normal vector to $N.$ As in
Equation \eqref{dfn of Lambda_N} we let%
\begin{equation*}
\Lambda _{N}\equiv \left\{ J|J\left( 0\right) =0\text{, }J^{\prime }\left(
0\right) \in \nu _{\gamma _{v}\left( 0\right) } N \right\} \oplus \left\{
J|J\left( 0\right) \in T_{\gamma _{v}\left( 0\right) }N\text{ and }J^{\prime
}\left( 0\right) =\mathrm{S}_{v}J\left( 0\right) \right\} .
\end{equation*}%
We set%
\begin{eqnarray*}
\mathcal{V} &\equiv &\left\{ J|J\left( 0\right) =0\text{, }J^{\prime }\left(
0\right) \in \nu _{\gamma _{v}\left( 0\right) } N \right\} \text{, and } \\
W &\equiv &\left\{ J|J\left( 0\right) \in T_{\gamma _{v}\left( 0\right) }N%
\text{ and }J^{\prime }\left( 0\right) =\mathrm{S}_{v}J\left( 0\right)
\right\} .
\end{eqnarray*}

Since $N$ has no focal points, $\Lambda _{N}$ has no singularities on $%
\mathbb{R}\setminus \left\{ 0\right\} $. Thus for all $J\in \Lambda
_{N}\setminus \left\{ 0\right\} $ and all $t\in \mathbb{R}\setminus \left\{
0\right\} ,$ $J\left( t\right) \neq 0.$ By replacing $v$ with $-v$, if
necessary, we may assume that 
\begin{equation}
\Tr\left( \mathrm{S}|_{W}\left( 0\right) \right) \leq 0.  \label{inti nonpos}
\end{equation}

By Lemma \ref{lem:riccati comparison nnc}, it follows that $t\longmapsto
\Lambda \left( t\right) $ splits orthogonally into the parallel
distributions 
\begin{equation*}
\Lambda \left( t\right) \equiv W\left( t\right) \oplus \mathcal{V}\left(
t\right) ,
\end{equation*}%
and every field in $W$ is parallel. Since we started with an arbitrary
normal vector$,$ $N$ is totally geodesic, and Parts 1 and 4 are proven. Part
2 is a consequence of the Hopf-Rinow Theorem (see Part (e) of Theorem 2.8 on
page 147 of \cite{doCarm}).

Part 3 follows by observing that $\exp _{N}^{\perp }:\left( \nu \left(
N\right) ,\left( \exp _{N}^{\perp }\right) ^{\ast }\left( g\right) \right)
\longrightarrow \left( M,g\right) $ is a local isometry, so as in the proof
of Cartan-Hadamard, $\exp _{N}^{\perp }$ is a cover (see Lemma 3.3 on page
150 of \cite{doCarm} or Lemma 5.6.4 of \cite{Pet}). Part 5 follows from the
fact that every field in $W$ is parallel.

To prove Part 6, let $\mathrm{II}$ be the second fundamental form of $%
\mathrm{image}\left( \Phi \right) .$ Since 
\begin{equation*}
\gamma _{V\left( s\right) }:t\longmapsto \Phi \left( t,s\right) =\exp
_{c\left( s\right) }^{\perp }\left( tV\left( s\right) \right)
\end{equation*}%
is a geodesic, $\mathrm{II}\left( \frac{\partial \Phi }{\partial t},\frac{%
\partial \Phi }{\partial t}\right) =0,$ and since $\frac{\partial \Phi }{%
\partial s}$ is parallel along $\gamma _{V\left( s\right) },$ $\mathrm{II}%
\left( \frac{\partial \Phi }{\partial t},\frac{\partial \Phi }{\partial s}%
\right) =0.$ To determine $\mathrm{II}\left( \frac{\partial \Phi }{\partial s%
},\frac{\partial \Phi }{\partial s}\right) ,$ observe that the lift, $\left(
\exp _{N}^{\perp }\right) ^{\ast }\left( \frac{\partial \Phi }{\partial s}%
\right) ,$ of $\frac{\partial \Phi }{\partial s}$ via $\exp _{N}^{\perp },$
is a basic horizontal, geodesic field for the Riemannian submersion%
\begin{equation*}
\pi :(\nu \left( N\right) ,\left( \exp _{N}^{\perp }\right) ^{\ast }\left(
g\right) )\longrightarrow N.
\end{equation*}%
Thus $\mathrm{II}\left( \frac{\partial \Phi }{\partial s},\frac{\partial
\Phi }{\partial s}\right) =\nabla _{\frac{\partial \Phi }{\partial s}}\frac{%
\partial \Phi }{\partial s}\equiv 0,$ and the image of $\Phi $ is totally
geodesic. Since $\frac{\partial \Phi }{\partial s}$ is a parallel Jacobi
field along $\gamma _{V\left( s\right) },$ the image of $\Phi $ is flat.

To prove Part 7, consider a $V\in \mathcal{V}$ along with orthonormal
parallel fields $J_{1},\ldots ,J_{k-1}$ in $W.$ Since $\mathrm{sec}\left(
\gamma _{v}^{\prime },J_{i}\right) \equiv 0$ and $\mathrm{Ric}_{k}\geq 0,$ $%
\mathrm{sec}\left( \gamma _{v}^{\prime },V\right) \geq 0.$ Since $\Lambda
\left( t\right) \equiv W\left( t\right) \oplus \mathcal{V}\left( t\right) $
is a parallel, orthogonal splitting, all curvatures of the form $\mathrm{sec}%
\left( \gamma _{v}^{\prime },\cdot \right) $ are nonnegative.

Since $\mathrm{Ric}_{k}M\geq 0,$ it follows from the Gauss Equation that to
prove Part 8, it suffices to show that 
\begin{equation*}
\left\langle S\left( J\right) ,J\right\rangle \geq 0,
\end{equation*}%
for all $J\in \Lambda _{N}$ and all $t\geq 0.$ Since $\Lambda _{N}\left(
t\right) \equiv W\left( t\right) \oplus \mathcal{V}\left( t\right) $ is a
parallel, orthogonal splitting and $\left\langle S\left( J\right)
,J\right\rangle \equiv 0$ for all $J\in W,$ it suffices to show that 
\begin{equation*}
\left\langle S\left( J\right) ,J\right\rangle \geq 0
\end{equation*}%
for all $J\in \mathcal{V}$ and all $t\geq 0.$ If not, then for some $t_{0}>0$
and some $J\in \mathcal{V},$ $\left\langle S\left( J\right) ,J\right\rangle
<0.$ Set 
\begin{equation*}
U\equiv \left\{ J,L_{1},\ldots ,L_{k-1}\right\} ,
\end{equation*}%
where $L_{1},\ldots ,L_{k-1}$ are $\left( k-1\right) $--linearly independent
fields of $W.$ It follows that for some $c>t_{0},$ 
\begin{equation*}
Tr\left( S|_{U}\right) \left( t_{0}\right) <\frac{1}{t_{0}-c}<0,
\end{equation*}%
and hence from Lemma \ref{lem:riccati comparison nnc} that $\Lambda _{N}$
has a singularity, which is contrary to our hypothesis that $N$ has infinite
focal radius.
\end{proof}

In the case that $M$ is not simply connected, we have the following
structure result.

\begin{corollary}
\label{inf focal nonneg cor}Let $N$ be a closed submanifold in a compact
nonnegatively curved manifold $M.$ If $N$ has infinite focal radius, then
the universal cover, $\tilde{M}$ splits isometrically as 
\begin{equation*}
\tilde{M}=\tilde{N}_{0}\times \mathbb{R}^{m}
\end{equation*}%
where $\tilde{N}_{0}$ is compact and simply connected, and the universal
cover $\tilde{N}$ of $N$ is isometrically embedded in $\tilde{M}$ as 
\begin{equation*}
\tilde{N}=\tilde{N}_{0}\times \mathbb{R}^{l},
\end{equation*}%
where $\mathbb{R}^{l}$ is an affine subspace of $\mathbb{R}^{m}.$
\end{corollary}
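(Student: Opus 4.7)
The plan is to pass to universal covers and apply Cheeger--Gromoll splitting twice. Since $M$ is compact with $\sec \geq 0$, Cheeger--Gromoll gives an isometric splitting $\tilde M = K \times \mathbb{R}^m$ with $K$ compact and simply connected. Part~3 of Theorem~\ref{inter ricci soul them} (with $k=1$) says $\exp_N^\perp$ is a Riemannian covering onto $M$, so $i_*:\pi_1(N)\to\pi_1(M)$ is injective. Hence a connected component $\tilde N$ of the preimage of $N$ in $\tilde M$ is exactly the universal cover of $N$; since $\tilde M\to M$ is a local isometry, $\tilde N$ is closed, totally geodesic, and still has infinite focal radius in $\tilde M$.

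Since $\tilde N$ is simply connected, complete, satisfies $\sec\geq 0$, and carries a cocompact isometric $\pi_1(N)$-action, Cheeger--Gromoll applied to $\tilde N$ gives $\tilde N=\tilde N_0\times\mathbb{R}^l$ with $\tilde N_0$ compact and simply connected. The key technical step is to show that the embedding $\iota:\tilde N\hookrightarrow\tilde M$ respects both product structures. Let $X$ be tangent to the $\mathbb{R}^l$ factor at a point $p\in\tilde N$. The Gauss equation combined with total geodesy gives $\sec^{\tilde M}(X,Y)=\sec^{\tilde N}(X,Y)=0$ for every $Y\in T_p\tilde N$. For $Y\in\nu_p(\tilde N)$, Part~6 of Theorem~\ref{inter ricci soul them}, applied to a geodesic $c$ in the $\mathbb{R}^l$ factor with $c'(0)$ in the direction of $X$ and the parallel normal field $V$ along $c$ extending $Y$, produces a flat totally geodesic surface whose tangent plane at $p$ is spanned by $X$ and $Y$, giving $\sec^{\tilde M}(X,Y)=0$ as well. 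Thus $X$ has zero sectional curvature with every vector of $T_p\tilde M$, so it lies in the maximal flat subspace, which is the $\mathbb{R}^m$ factor. Consequently $d\iota$ sends the $\mathbb{R}^l$ factor into $\mathbb{R}^m$, and integrating along $\mathbb{R}^l$-direction geodesics shows the $K$-component $\iota_1(u,v)$ is independent of $v$. Conversely, for fixed $v$ every linear functional on the $\mathbb{R}^m$ factor restricts to an affine function on the compact totally geodesic image $\iota(\tilde N_0\times\{v\})$, hence is constant, so $\iota_2(u,v)$ is independent of $u$. Combining, $\iota(u,v)=(\iota_1(u),\beta(v))$ with $\iota_1:\tilde N_0\to K$ a totally geodesic isometric embedding and $\beta:\mathbb{R}^l\to\mathbb{R}^m$ affine.

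The product form of $\iota$ implies that focal points of $\iota(\tilde N)$ along normal geodesics with initial velocity in the $K$ factor are precisely focal points of $\iota_1(\tilde N_0)$ in $K$, so $\iota_1(\tilde N_0)$ inherits infinite focal radius in $K$. Applying Part~3 of Theorem~\ref{inter ricci soul them} inside $K$, the normal exponential $\exp^\perp:\nu^K(\iota_1(\tilde N_0))\to K$ is a Riemannian cover; simple connectedness of $K$ and connectedness of $\nu^K$ force it to be a Riemannian isometry. Since $K$ is compact and a vector bundle is compact exactly when its fibers are zero-dimensional, this forces $\iota_1(\tilde N_0)=K$, so $\iota_1$ is an isometry. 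The claimed splittings $\tilde M\cong\tilde N_0\times\mathbb{R}^m$ and $\tilde N\cong\tilde N_0\times\beta(\mathbb{R}^l)$ with $\beta(\mathbb{R}^l)$ an affine subspace of $\mathbb{R}^m$ then follow. I expect the main obstacle to be the middle paragraph: fitting the $\mathbb{R}^l$ factor of $\tilde N$ inside the $\mathbb{R}^m$ factor of $\tilde M$ via the curvature identity from Part~6 of Theorem~\ref{inter ricci soul them}, and then collapsing the compact $\tilde N_0$ factor to a single horizontal slice by the convexity/maximum-principle argument.
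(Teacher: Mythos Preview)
Your setup is fine: the injectivity of $i_*$, the identification of a component of the preimage with $\tilde N$, and the second application of Cheeger--Gromoll to $\tilde N$ all work as stated. The genuine gap is in the middle paragraph, at the sentence ``Thus $X$ has zero sectional curvature with every vector of $T_p\tilde M$, so it lies in the maximal flat subspace, which is the $\mathbb R^m$ factor.'' The implication is false. The condition $\sec^{\tilde M}(X,Y)=0$ for all $Y$ only says that $X_K$ (the $K$--component of $X$) lies in the pointwise curvature nullity of $K$; it does not force $X_K=0$. A compact simply connected $K$ with $\sec\geq 0$ can perfectly well have open flat regions (e.g.\ a round $S^2$ with a flattened cap), and at any point of such a region \emph{every} $X_K$ satisfies your curvature condition. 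So from $\sec(X,\cdot)\equiv 0$ alone you cannot conclude $d\iota(T\mathbb R^l)\subset T\mathbb R^m$, and the product decomposition $\iota(u,v)=(\iota_1(u),\beta(v))$ is not established. The rest of your argument (the affine--function step and the surjectivity of $\iota_1$) rests on this, so the proof does not close.

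The paper's proof avoids this by looking at the \emph{normal} directions to $\tilde N$ rather than the tangent $\mathbb R^l$--directions. Using the factorization $\tilde M\xrightarrow{p}\nu(N)\xrightarrow{\exp_N^\perp}M$ coming from Part~3 of Theorem~\ref{inter ricci soul them}, every normal vector to $\pi^{-1}(N)$ exponentiates to a ray in $\tilde M$. Since $K$ is compact, every ray in $K\times\mathbb R^m$ is tangent to the $\mathbb R^m$ factor; hence $\nu(\pi^{-1}(N))\subset T\mathbb R^m$, so $TK\subset T\pi^{-1}(N)$, and total geodesy then forces $\pi^{-1}(N)=K\times\mathbb R^l$ with $\mathbb R^l$ affine in $\mathbb R^m$. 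Note that this gives $\tilde N_0=K$ directly, without your separate Cheeger--Gromoll splitting of $\tilde N$ or the final surjectivity argument. If you want to rescue your tangential approach, you would need to upgrade the pointwise curvature statement to the assertion that the $\mathbb R^l$--geodesics of $\tilde N$ are \emph{lines} in $\tilde M$; that is a metric statement, not a curvature one, and proving it essentially brings you back to the ray argument.
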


\begin{proof}
Let $\pi :\tilde{M}\longrightarrow M$ by the universal cover. By Theorem 9.1
in \cite{CheegGrom}, $\tilde{M}$ splits isometrically as 
\begin{equation*}
\tilde{M}=M_{0}\times \mathbb{R}^{m}
\end{equation*}%
where $M_{0}$ is compact and simply connected. By Part 3 of Theorem \ref%
{inter ricci soul them}, $\pi :\tilde{M}\longrightarrow M$ factors through $%
\exp _{N}^{\perp }:\nu \left( N\right) \longrightarrow M.$ That is we have a
Riemannian cover $p:\tilde{M}\longrightarrow \nu \left( N\right) $ so that $%
\pi =\exp _{N}^{\perp }\circ p.$ Since every normal vector to the zero
section, $N_{0},$ in $\nu \left( N\right) $ exponentiates to a ray, every
normal vector to $\pi ^{-1}\left( N\right) $ exponentiates to a ray. Since $%
\tilde{M}$ is the metric product, $M_{0}\times \mathbb{R}^{m},$ every normal
vector to $\pi ^{-1}\left( N\right) $ is tangent to an $\mathbb{R}^{m}$%
--factor. Thus every tangent space to $\pi ^{-1}\left( N\right) $ has the
form $TM_{0}\times \mathbb{R}^{l},$ where $\mathbb{R}^{l}$ is an affine
subspace of $\mathbb{R}^{m}.$ Since $\pi ^{-1}\left( N\right) $ is totally
geodesic and without boundary it follows that $\pi ^{-1}\left( N\right) $ is 
$M_{0}\times \mathbb{R}^{l}$ where $\mathbb{R}^{l}$ is an affine subspace of 
$\mathbb{R}^{m}.$
\end{proof}

\subsection{What can be done with just classical Riccati comparison?\label%
{subsect: classic comp}}

Although weak versions of all of our results can be obtained using just
classical Riccati comparison, to the best of our knowledge no theorem
discussed here can be proven with out the Transverse Jacobi Equation. As a
concrete example, we point out that classical comparison yields the
following weak form of Theorem \ref{II estimate}.

\bigskip

\noindent \textbf{Weak Form of Theorem \ref{II estimate}: }\emph{For }$\kappa =-1,0,$\emph{\
or }$1,$\emph{\ let }$M$\emph{\ be a complete Riemannian }$n$\emph{%
--manifold with sectional curvature }$\geq \kappa ,$\emph{\ and let }$N$%
\emph{\ be any hypersurface of }$M.$\emph{\ Then at every point of }$N$\emph{%
\ there is a \textbf{single} vector }$v$\emph{\ so that }%
\begin{equation*}
\begin{array}{ll}
\,\mathrm{II}_{N}\left( v,v\right) \geq -\dfrac{\left\vert v\right\vert ^{2}%
}{\cot \left( \mathrm{FocalRadius}\left( N\right) \right) } & \text{if }%
\kappa =1 \\ 
\mathrm{II}_{N}\left( v,v\right) \geq -\dfrac{\left\vert v\right\vert ^{2}}{%
\mathrm{FocalRadius}\left( N\right) } & \text{if }\kappa =0 \\ 
\mathrm{II}_{N}\left( v,v\right) \geq -\dfrac{\left\vert v\right\vert ^{2}}{%
\coth \left( \mathrm{FocalRadius}\left( N\right) \right) } & \text{if }%
\kappa =-1.%
\end{array}%
\end{equation*}

\bigskip

To clarify how classical comparison fails to yield Theorem \ref{II estimate}, we note that
the sectional curvature version of Lemma $\ref{sing soon Ric_k Lemma}$
implies that if Inequality ($\ref{small future Inequal})$ fails for all $1$%
--dimensional subspaces $H\left( t\right) \subset T_{\gamma \left( t\right)
}M ^{\perp }$ then Inequality \eqref{small initial} fails for all $1$%
--dimensional subspaces $W_{t_{0}}\subset T_{\gamma \left( t_{0}\right)
}M^{\perp }$. In contrast, the classical theorem of \cite{EschHein} only
gives that Inequality ($\ref{small initial})$ fails for some $%
W_{t_{0}}\subset T_{\gamma \left( t_{0}\right) }M ^{\perp }$. Examples 2.37
and 2.38 in \cite{GuijWilh} show that there is no classical analog to Lemma $%
\ref{sing soon Ric_k Lemma},$ (also see the commentary after Lemma E in \cite%
{GuijWilh}.)

\section{Submanifold Restrictions\label{submanf finitieness secti}}

The main step in the proof of Theorem \ref{finiteness thm} is to show that
the intrinsic metrics on all of the submanifolds satisfy the hypothesis of
Cheeger's Finiteness Theorem, \cite{Cheeg2}.

\begin{lemma}
\label{bnd geom lemma}Let $M$ be a compact Riemannian manifold. Given $D,r>0$
let $\mathcal{S}$ be the class of closed Riemannian manifolds that can be
isometrically embedded into $M$ with focal radius $\geq r$ and intrinsic
diameter $\leq D$. Then there are positive numbers $K,v>0$ so that for every 
$S\in $ $\mathcal{S}$,%
\begin{equation*}
\left\vert \mathrm{sec}_{S}\right\vert \leq K\text{ and \textrm{vol}}\left(
S\right) >v.
\end{equation*}
\end{lemma}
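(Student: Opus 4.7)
The plan is to verify the two estimates separately and then combine them. The sectional curvature bound will come from Theorem \ref{II estimate} together with the Gauss equation, and the volume lower bound from combining Heintze--Karcher's tube formula \cite{HeKar} with a uniform lower bound on the volume of small metric balls in the compact manifold $M$.

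\textbf{Sectional curvature bound.} Since $M$ is compact there exist constants $\kappa_0$ and $K_M$ with $\kappa_0\le \mathrm{sec}_M \le K_M$. For each $S\in\mathcal{S}$, after rescaling $M$ so that its lower sectional curvature bound becomes $-1$, $0$, or $1$ (so that the focal radius bound $r$ rescales accordingly), Theorem \ref{II estimate} applies. Undoing the rescaling yields
\begin{equation*}
|\mathrm{II}_S|\le C_1,
\end{equation*}
with $C_1$ depending only on $r$ and $\kappa_0$. The Gauss equation then gives, for any orthonormal $X,Y\in T_pS$,
\begin{equation*}
|\mathrm{sec}_S(X,Y)|\le|\mathrm{sec}_M(X,Y)|+2|\mathrm{II}_S|^2\le K_M+2C_1^2=:K.
\end{equation*}

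\textbf{Volume lower bound.} Since the focal radius of $S$ is $\ge r$, the tube $T_r(S):=\exp_S^{\perp}(D_r(S))$ is well-defined as the image of the normal disc bundle. Heintze--Karcher's tube formula \cite{HeKar}, applied with the lower curvature bound $\mathrm{sec}_M\ge\kappa_0$ and the uniform shape-operator bound $|\mathrm{II}_S|\le C_1$ from Step~1, yields
\begin{equation*}
\mathrm{vol}(T_r(S))\le C_2\cdot\mathrm{vol}(S),
\end{equation*}
where $C_2$ depends only on $r$, $\kappa_0$, $C_1$, and $n$, and is therefore uniform across $\mathcal{S}$. For a lower bound on $\mathrm{vol}(T_r(S))$, pick any $p\in S$. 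For every $x\in B_r^M(p)$ we have $d(x,S)<r$; since $S$ is closed, a minimizing geodesic $\sigma$ from $S$ to $x$ exists, has length $<r$, meets $S$ perpendicularly by the first variation formula, and is free of focal points of $S$ (otherwise a shorter path to $x$ from $S$ would exist by second variation). Hence $x\in T_r(S)$, so $B_r^M(p)\subseteq T_r(S)$. Because $M$ is compact, the continuous positive function $p\mapsto\mathrm{vol}(B_r^M(p))$ has a uniform positive lower bound $v_0>0$. Combining,
\begin{equation*}
\mathrm{vol}(S)\ge v_0/C_2=:v>0.
\end{equation*}

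\textbf{Main obstacle.} The conceptual content is already contained in the uniform second fundamental form bound from Theorem \ref{II estimate}; once this is in hand, both halves of the lemma are short. The only technical care needed is tracking the constants through Heintze--Karcher, which amounts to comparing the Jacobian of $\exp_S^\perp$ with that of the analogous map in the constant-curvature $\kappa_0$ model space where the initial shape operator has norm at most $C_1$. This comparison is classical and explicit, so the argument is routine once Step~1 is established.
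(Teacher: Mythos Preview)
Your proof is correct and follows essentially the same approach as the paper: the curvature bound via Theorem~\ref{II estimate} plus the Gauss equation, and the volume bound via Heintze--Karcher combined with the $\mathrm{II}$ estimate. The only cosmetic difference is in the volume step: the paper applies the Heintze--Karcher inequality directly in the form $\mathrm{vol}(M)\le \mathrm{vol}(S)\cdot f_\delta(\mathrm{diam}(M),\Lambda)$, using that the tube of radius $\mathrm{diam}(M)$ already covers all of $M$, whereas you bound the tube of radius $r$ and then argue it contains a metric $r$--ball; both yield a uniform lower bound depending only on $M$, $r$, and $\kappa_0$.
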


\begin{proof}
The compactness of $M$ gives us ambient upper and lower curvature bounds.
Combined with Theorem \ref{II estimate}, we get the existence of $K.$

It remains to derive a uniform lower volume bound for the $S\in \mathcal{S}.$
To do this we use the first display formula on Page 1 of \cite{HeKar}:%
\begin{equation*}
\mathrm{vol}\left( M\right) \leq \mathrm{vol}\left( N\right) \cdot f_{\delta
}\left( \mathrm{diam}\left( M\right) ,\Lambda \right) .
\end{equation*}%
Here $N$ is a compact, embedded submanifold of $M,$ $\delta $ is a lower
curvature bound for $M$, $\Lambda $ is an upper bound for the mean curvature
of $N,$ and the function $f_{\delta }$ is given explicitly on Page 453 of 
\cite{HeKar}. Theorem \ref{II estimate} gives us an upper bound for $\Lambda 
$ and hence a $C>0$ so that 
\begin{equation*}
f_{\delta }\left( \mathrm{diam}\left( M\right) ,\Lambda \right) \leq C.
\end{equation*}%
Setting $v=\frac{\mathrm{vol}\left( M\right) }{C}$ completes the proof.
\end{proof}

Recall that the Cheeger-Gromov compactness theorem states

\begin{theorem}[{see \protect\cite[Theorem 3.6]{Cheeg1}, \protect\cite[%
Theorem 11.3.6]{Pet}}]
Given $0<\beta <\alpha <1,$ $k,K\in \mathbb{R},$ $v,D>0,$ and $n\in \mathbb{N%
},$ let $\left\{ M_{i}\right\} _{i=1}^{\infty }$ be a sequence of closed
Riemannian manifolds with 
\begin{equation*}
k\leq \mathrm{sec}M_{i}\leq K,\text{ }\mathrm{vol}\left( M_{i}\right) \geq v,%
\text{ and }\mathrm{Diam}\left( M_{i}\right) \leq D.
\end{equation*}%
Then there is a $C^{1,\alpha }$--Riemannian manifold $M_{\infty }$ and a
subsequence of $\left\{ M_{i}\right\} _{i=1}^{\infty }$ that converges to $%
M_{\infty }$ in the $C^{1,\beta }$ topology.
\end{theorem}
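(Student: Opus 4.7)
The plan is to follow the classical route of Cheeger and Gromov: convert the geometric hypotheses into uniform local analytic control via harmonic coordinates, and then extract a subsequential limit by an Arzelà--Ascoli argument applied to the metric tensors read in those coordinates.

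First I would establish a uniform positive lower bound on the injectivity radius. With two-sided sectional curvature bounds $k\le \sec \le K$, a lower volume bound $v$, and an upper diameter bound $D$ in a fixed dimension $n$, Cheeger's lemma (see \cite{Cheeg1}) produces $i_0=i_0(n,k,K,v,D)>0$ with $\mathrm{inj}(M_i)\ge i_0$ for every $i$. This is the one step that makes genuine use of both the curvature and volume bounds, and it is arguably the analytic heart of the whole theorem; the rest is comparatively formal. The second step is to construct harmonic coordinate charts of definite size on each $M_i$: at every point $p\in M_i$ one finds a harmonic chart on $B(p,r_0)$, with $r_0=r_0(n,k,K,i_0)>0$, in which the metric coefficients $g_{ab}$ satisfy uniform $C^{1,\alpha}$ bounds and the eigenvalues of $(g_{ab})$ are pinched between two positive constants. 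Such a construction is due to Jost--Karcher and DeTurck--Kazdan and produces, via standard elliptic estimates for the equation $\Delta x^a=0$ together with the identity $\Delta g_{ab}=-2\mathrm{Ric}_{ab}+(\text{lower order in }g,\partial g)$, bounds that depend only on $n,k,K$ and $i_0$ and on the chosen $\alpha\in(0,1)$.

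With these charts in hand, the diameter bound $D$ and the lower bound on $r_0$ give a uniform upper bound $N=N(n,k,K,v,D,r_0)$ on the number of charts needed to cover each $M_i$, with controlled multiplicity of overlap. Passing to a subsequence I can assume the underlying combinatorics of the cover (the nerve of the chart system) is the same for all $i$, and the transition functions between overlapping charts, being isometries read in harmonic coordinates, also enjoy uniform $C^{2,\alpha}$ bounds. Applying Arzelà--Ascoli to the transition maps and to the metric components $g_{ab}^{(i)}$, and diagonalizing over the finitely many charts, I extract a subsequence for which the transition maps converge in $C^{2,\beta}$ and each $g_{ab}^{(i)}$ converges in $C^{1,\beta}$ to limits $g_{ab}^{(\infty)}$ of class $C^{1,\alpha}$.

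The final step is assembly: the limiting transition maps determine a $C^{2,\alpha}$ atlas on a topological manifold $M_\infty$, and the limiting coefficients $g_{ab}^{(\infty)}$ patch together into a $C^{1,\alpha}$ Riemannian metric. Convergence of the subsequence to $M_\infty$ in the $C^{1,\beta}$ topology then follows from the chart-wise convergence together with a standard reindexing via the transition functions, using $\beta<\alpha$ so that a priori $C^{1,\alpha}$ bounds yield $C^{1,\beta}$ precompactness. The one step where care is required is to arrange the subsequence and the identifications so that all the chart-wise convergences are compatible across overlaps, but this is handled by the usual diagonal argument applied to the finite nerve; accordingly the real difficulty remains the injectivity-radius estimate and the quantitative existence of harmonic charts with uniform $C^{1,\alpha}$ control, after which everything is soft.
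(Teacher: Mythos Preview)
The paper does not actually prove this statement: it is quoted as a known result from the literature, with explicit references to \cite{Cheeg1} and \cite{Pet}, and is used as a black box in the proof of Theorem~\ref{finiteness thm}. There is therefore no ``paper's own proof'' to compare against.

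That said, your sketch is the standard argument one finds in those references, particularly in \cite{Pet}: Cheeger's injectivity radius estimate, then harmonic coordinates with uniform $C^{1,\alpha}$ control on the metric coefficients, then Arzel\`a--Ascoli on the finite atlas to extract a $C^{1,\beta}$--convergent subsequence. Your outline is correct in spirit and structure, and correctly identifies the injectivity radius lemma and the quantitative harmonic chart existence as the substantive steps.
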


\begin{proof}[Proof of Theorem \protect\ref{finiteness thm}]
It follows from Lemma \ref{bnd geom lemma} that the class $\mathcal{S}$
satisfies the hypotheses of Cheeger's finiteness theorem. So any sequence $%
\left\{ S_{i}\right\} \subset \mathcal{S}$ has a subsequence (also called $%
\left\{ S_{i}\right\} $) that converges in the $C^{1,\beta }$--topology to
an abstract $C^{1,\alpha }$ Riemannian manifold $\left( S_{\infty
},g_{\infty }\right) .$ Let $\varphi _{i}:S_{\infty }\longrightarrow S_{i}$
be diffeomorphisms so that $\varphi _{i}^{\ast }\left( g_{i}\right) \overset{%
C^{1,\beta }}{\longrightarrow }$ $g_{\infty }.$ Let $f_{i}:S_{i}$ $%
\longrightarrow M$ be the sequence of inclusions of $S_{i}$ into $M.$
Composing gives a sequence $f_{i}\circ \varphi _{i}:S_{\infty
}\longrightarrow M,$ that is uniformly bounded in the $C^{1,\beta }$%
--topology. From Arzela-Ascoli it follows that $\left\{ f_{i}\circ \varphi
_{i}\right\} _{i}$ subconverges in the $C^{1,\beta }$--topology to an
isometric embedding $f_{\infty }:S_{\infty }\longrightarrow M$.
\end{proof}

\begin{example}[Theorem \protect\ref{finiteness thm} is optimal]
\label{focal ex} The isometric embedding theorem of J. Nash says that for
given $k$, there is some $n=n(k)$ such that any $k$-dimensional Riemannian
manifold embeds isometrically in $\mathbb{R}^{n}$. Consider then any compact
Riemannian manifold, and rescale its metric so that its diameter is bounded
above by 1. If needed, rescale the metric in $\mathbb{R}^{n}$ so that the
image of an isometric embedding $f:M\rightarrow \mathbb{R}^{n}$ is contained
in the interior of some fundamental domain for the covering space $\pi :%
\mathbb{R}^{n}\rightarrow \mathbb{T}^{n}$. Taking the composition $\pi \circ
f$, we get an isometric embedding of $M$ into $\mathbb{T}^{n}$ with
intrinsic diameter bounded above; thus Theorem \ref{finiteness thm} is
optimal in the sense that its conclusion is false if the hypothesis about
the lower bound on the focal radii is removed. 

To see that the hypothesis about the intrinsic diameters can not be removed,
let $\lambda \mathbb{S}^{1}$ be the circle of radius $\lambda $. For each $k$%
-manifold $(M,g)$, choose a rational number $\lambda $ so that the image of
the isometric embedding 
\begin{equation*}
j:\lambda M\hookrightarrow \lambda \mathbb{T}^{n}
\end{equation*}%
has focal radius greater or equal than 1.

Next use that, for the given $\lambda $, there is an isometric embedding 
\begin{equation*}
\iota :\lambda \mathbb{S}^{1}\hookrightarrow \mathbb{T}^{2},
\end{equation*}%
and let 
\begin{equation*}
I:\lambda \mathbb{T}^{n}\rightarrow \mathbb{T}^{2n}
\end{equation*}%
be the product embedding. The images of the composition $I\circ j:\left(
M,g\right) \hookrightarrow \mathbb{T}^{2n}$ all have focal radius $\geq 1$.
Thus Theorem \ref{finiteness thm} is false if the hypothesis about the upper
bound on the diameter is removed.
\end{example}

%
%

\subsection{Other Finiteness Statements}

Various other finiteness theorems for submanifolds follow by combining the
proof of Theorem \ref{finiteness thm} with existing results. For example,
using the main theorem of \cite{GrovPetWu} we have

\begin{theorem}
Given $k\in \mathbb{R},$ $v,D>0,$ $n\in \mathbb{N},$ and $r>0,$ let $%
\mathcal{M}\left( k,v,n\right) $ be the class of closed Riemannian $n$%
-manifolds with sectional curvature $\geq k$ , volume $\geq v,$ and diameter 
$\leq D,$ and let $\mathcal{S}$ be the class of closed Riemannian manifolds
that can be isometrically embedded into an element of $\mathcal{M}\left(
k,v,n\right) $ so that the image has focal radius $\geq r$ and intrinsic
diameter $\leq D.$ Then $\mathcal{S}$ contains only finitely many
homeomorphism types.
\end{theorem}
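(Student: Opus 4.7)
The plan is to mimic the proof of Theorem~\ref{finiteness thm}, substituting the main result of \cite{GrovPetWu} for Cheeger's Finiteness Theorem. The essential point is that every hypothesis used to prove Lemma~\ref{bnd geom lemma} is still available uniformly across the family $\mathcal{M}(k,v,n)$, so the same arguments go through with little change.

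First I would establish the analogue of Lemma~\ref{bnd geom lemma}. Fix an ambient $M\in\mathcal{M}(k,v,n)$ and a submanifold $N\subset M$ belonging to $\mathcal{S}$. After rescaling so that the lower curvature bound is one of $-1,0,1$, Theorem~\ref{II estimate} gives a bound
\begin{equation*}
|\mathrm{II}_{N}|\,\leq\, C_{1}=C_{1}(k,r),
\end{equation*}
depending only on $k$ and $r$. The Gauss equation then yields a two-sided curvature bound
\begin{equation*}
|\mathrm{sec}_{N}|\,\leq\, K=K(k,n,r),
\end{equation*}
because the ambient sectional curvature is bounded below by $k$ and $\dim N\leq n$. (Note only a lower bound on $\mathrm{sec}_{N}$ is needed for \cite{GrovPetWu}, but the Gauss equation delivers both.) For the volume, I would apply the Heintze--Karcher tube inequality exactly as in Lemma~\ref{bnd geom lemma}:
\begin{equation*}
\mathrm{vol}(M)\,\leq\, \mathrm{vol}(N)\cdot f_{k}\bigl(\mathrm{diam}(M),\Lambda\bigr),
\end{equation*}
where the mean curvature bound $\Lambda$ is controlled by $C_{1}$, $\mathrm{diam}(M)\leq D$, and $\mathrm{vol}(M)\geq v$. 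This produces a uniform lower bound $\mathrm{vol}(N)\geq v'=v'(k,v,D,n,r)>0$.

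Next I would invoke the main theorem of \cite{GrovPetWu}: for each fixed dimension $d$, the class of closed Riemannian $d$-manifolds with sectional curvature $\geq -K$, volume $\geq v'$, and diameter $\leq D$ contains only finitely many homeomorphism types. Since every $N\in\mathcal{S}$ has $\dim N\in\{1,2,\dots,n\}$, the total number of homeomorphism types across $\mathcal{S}$ is finite, being a finite union of finite sets.

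The one point that deserves care, and which I expect to be the only mild obstacle, is that the ambient manifold $M$ is no longer fixed, so the constants in Lemma~\ref{bnd geom lemma} must be shown to depend only on $k,v,D,n,r$ and not on $M$ itself. This follows because every quantity entering the Heintze--Karcher estimate ($\mathrm{vol}(M)$, $\mathrm{diam}(M)$, the lower curvature bound of $M$) is controlled uniformly by the definition of $\mathcal{M}(k,v,n)$, and the bound on $|\mathrm{II}_{N}|$ coming from Theorem~\ref{II estimate} depends only on $k$ and $r$. Once this uniformity is verified, the application of \cite{GrovPetWu} is immediate and the proof is complete.
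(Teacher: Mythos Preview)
Your proposal is correct and follows exactly the approach the paper indicates: reproduce the bounds of Lemma~\ref{bnd geom lemma} uniformly over $\mathcal{M}(k,v,n)$ and then invoke the Grove--Petersen--Wu finiteness theorem in place of Cheeger's. One small inaccuracy worth flagging: the Gauss equation does \emph{not} deliver an upper bound on $\mathrm{sec}_N$ in this setting, since the manifolds in $\mathcal{M}(k,v,n)$ carry no upper sectional curvature bound (in Lemma~\ref{bnd geom lemma} the ambient $M$ was fixed and compact, which supplied that bound); however, as you yourself note, \cite{GrovPetWu} requires only the lower bound $\mathrm{sec}_N\geq -K$, so the argument goes through unchanged.
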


\section{Submetries and Conjugate Points\label{subm and conj sect}}

In this section we prove Theorem \ref{submetry cor}. We start, in subsection %
\ref{submetries and Holn sect} with a establishing some basic facts about
holonomy for manifold submetries. We then prove Theorem \ref{submetry cor}
in subsection \ref{Subm and conj pts}.

\subsection{Submetries and Holonomy\label{submetries and Holn sect}}

Throughout this section, we assume $M$ is an Alexandrov space with curvature
bounded from below, $\pi :M\longrightarrow X$ is a submetry, and $\gamma :%
\left[ 0,b\right] \longrightarrow X$ is a geodesic.

The proof of Lemma 2.1 in \cite{BereGuij} gives us the following.

\begin{proposition}
\noindent 1. Given any $y\in \pi ^{-1}\left( \gamma \left( 0\right) \right)
, $ there is a lift of $\gamma $ starting at $y.$

\noindent 2. If for some $\varepsilon >0,$ $\gamma $ extends as a geodesic
to $\left[ -\varepsilon ,b\right] ,$ then the lift in Part 1 is unique.
\end{proposition}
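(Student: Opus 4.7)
My plan is to derive Part 1 from the defining submetry identity $\pi(\overline{B_r(y)}) = \overline{B_r(\pi(y))}$ via a short-lift construction that is then patched along $[0,b]$, and to derive Part 2 from the nonbranching of geodesics in finite dimensional Alexandrov spaces with curvature bounded below.

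For Part 1, I would first produce short lifts. Given $y_0 = y$ and a small $t_0 > 0$, the submetry identity furnishes $y_1 \in \pi^{-1}(\gamma(t_0))$ with $d_M(y_0, y_1) \leq t_0$; since $\pi$ is $1$-Lipschitz and $d_X(\pi(y_0), \gamma(t_0)) = t_0$, in fact $d_M(y_0, y_1) = t_0$. A minimizing $M$-geodesic from $y_0$ to $y_1$ then projects to a $1$-Lipschitz curve of length at most $t_0$ joining points of $X$-distance $t_0$, so its projection is an $X$-minimizer; for $t_0$ below a local uniqueness radius along $\gamma$, this projection must coincide with $\gamma|_{[0,t_0]}$, and the short minimizer in $M$ is a genuine lift. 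I would then iterate and take the supremum $T$ of times up to which a lift starting at $y$ exists. Using a lower curvature bound on $M$ together with an Arzel\`{a}--Ascoli argument, I can extract a limit at time $T$ and upgrade a half-open lift to a lift on $[0,T]$; if $T < b$, reapplying the local construction at that endpoint contradicts maximality, forcing $T = b$.

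For Part 2, I would apply Part 1 to the reversed extension $s \mapsto \gamma(-s)$ on $[0,\varepsilon]$ starting at $y$, then reverse the resulting lift to obtain $\tilde{\beta}: [-\varepsilon, 0] \to M$ with $\tilde{\beta}(0) = y$ and $\pi \circ \tilde{\beta} = \gamma|_{[-\varepsilon, 0]}$. For two candidate lifts $\tilde{\gamma}_1, \tilde{\gamma}_2$ of $\gamma$ starting at $y$, the concatenations $c_i := \tilde{\beta} * \tilde{\gamma}_i : [-\varepsilon, b] \to M$ each have length $\varepsilon + b$, and $\pi$ being $1$-Lipschitz gives $d_M(\tilde{\beta}(-\varepsilon), \tilde{\gamma}_i(b)) \geq \varepsilon + b$, so each $c_i$ is a minimizing $M$-geodesic. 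Since $c_1$ and $c_2$ coincide with $\tilde{\beta}$ on $[-\varepsilon, 0]$, nonbranching of geodesics in finite dimensional Alexandrov spaces with curvature bounded below forces $c_1 = c_2$, whence $\tilde{\gamma}_1 = \tilde{\gamma}_2$.

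The step I expect to require the most care is the global patching in Part 1: the local uniqueness radius of $X$-minimizers along $\gamma$ can shrink as one moves along $\gamma$, so the passage from open to closed in the supremum argument has to combine the Alexandrov compactness of bounded sets in $M$ (to extract a limit over $\gamma(T)$) with the submetry identity at that limit (to continue past $T$), and then verify that the new short forward lift glues consistently to the lift already built on $[0,T]$.
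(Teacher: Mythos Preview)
The paper does not give its own proof; it simply invokes Lemma~2.1 of \cite{BereGuij}. Your strategy is essentially the standard one underlying that result, and the overall plan is sound, but two steps need sharpening.

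In Part~1, the phrase ``local uniqueness radius along $\gamma$'' is not a valid notion in a general Alexandrov space: there is no radius below which minimizers between points are automatically unique. What \emph{is} true, and suffices, is that if $\gamma$ minimizes on $[0,\delta]$ and $\sigma$ is any minimizer from $\gamma(0)$ to $\gamma(t_0)$ with $0<t_0<\delta$, then the concatenation $\sigma\ast\gamma|_{[t_0,\delta]}$ is a minimizer coinciding with $\gamma$ on $[t_0,\delta]$; since $X$ inherits the lower curvature bound from $M$ via the submetry, non-branching in $X$ forces $\sigma=\gamma|_{[0,t_0]}$. So your projected $M$-minimizer really is $\gamma|_{[0,t_0]}$, but for this reason rather than an injectivity-radius one.

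In Part~2, the inequality $d_M\bigl(\tilde\beta(-\varepsilon),\tilde\gamma_i(b)\bigr)\geq\varepsilon+b$ requires $\gamma$ to be a \emph{global} minimizer on $[-\varepsilon,b]$, whereas the paper's convention is that ``geodesic'' means only locally minimizing. As written, the step fails in that generality. The repair is to localize: choose $\eta>0$ so that $\gamma$ minimizes on $[-\eta,\eta]$; then each $c_i|_{[-\eta,\eta]}$ is an $M$-minimizer by your $1$-Lipschitz argument, the two agree on $[-\eta,0]$, and non-branching in $M$ gives $c_1=c_2$ on $[-\eta,\eta]$. Covering $[0,b]$ by finitely many short subintervals on which $\gamma$ minimizes and repeating this overlap argument propagates $\tilde\gamma_1=\tilde\gamma_2$ to all of $[0,b]$.
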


Part 2 allows us to define holonomy maps between the fibers of $\pi $ over
the \emph{interior} of $\gamma $ as follows.

\begin{definition}
Given any $s,t\in \left( 0,b\right) ,$ we define the holonomy maps 
\begin{equation*}
H_{s,t}:\pi ^{-1}\left( \gamma \left( s\right) \right) \longrightarrow \pi
^{-1}\left( \gamma \left( t\right) \right)
\end{equation*}%
by 
\begin{equation*}
H_{s,t}\left( x\right) =\tilde{\gamma}_{x}\left( t\right) ,
\end{equation*}%
where $\tilde{\gamma}_{x}$ is the unique lift of $\gamma $ so that $\tilde{%
\gamma}_{x}\left( s\right) =x.$
\end{definition}

\begin{proposition}
\label{Hol diffeos}If $M$ is Riemannian and $\pi $ is a manifold submetry,
then for all $s,t\in \left( 0,b\right) ,$ $H_{s,t}$ is a $C^{\infty }$
diffeomorphism.
\end{proposition}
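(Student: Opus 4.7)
My plan is to establish smoothness of $H_{s,t}$ locally, via a tubular-neighborhood foot-point projection, and then propagate it to the general case by composing along a subdivision of $[s,t]$ into short subintervals. Write $F_u := \pi^{-1}(\gamma(u))$ and fix $s_0, t_0 \in (0,b)$ with $s_0 < t_0$ (the reverse case is symmetric). Fix $x_0 \in F_{s_0}$, and let $\tilde{\gamma}_0$ be the unique lift of $\gamma$ with $\tilde{\gamma}_0(s_0)=x_0$ given by the preceding proposition.

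For the local step, apply the tubular neighborhood theorem to the smooth submanifold $F_{s_0}$ near $x_0$ to obtain a neighborhood $U$ of $x_0$ in $F_{s_0}$, an $\varepsilon>0$ strictly less than the local focal radius, and an open set $W\subset M$ such that
\[
\exp^{\perp}_{F_{s_0}} \colon \{(x,v): x\in U,\; v\in \nu_x(F_{s_0}),\; |v|<\varepsilon\}\longrightarrow W
\]
is a diffeomorphism. Shrinking $\varepsilon$ further if necessary, I also require the ball $B(\gamma(s_0),\varepsilon)\subset X$ to be strongly convex (any two of its points joined by a unique minimizing geodesic staying in the ball); such an $\varepsilon$ exists because $X$ is a finite-dimensional Alexandrov space. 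Let $p\colon W\to F_{s_0}$ denote the smooth foot-point projection, i.e.\ $\left(\exp^{\perp}_{F_{s_0}}\right)^{-1}$ followed by the bundle projection to $U$.

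I claim that for $t_0-s_0<\varepsilon$, one has $H_{t_0,s_0}\big|_{F_{t_0}\cap W} = p\big|_{F_{t_0}\cap W}$. Given $y\in F_{t_0}\cap W$, write uniquely $y = \exp_x(v)$ with $x\in U$, $v\in\nu_x(F_{s_0})$, $|v|<\varepsilon$. The geodesic $c(\tau) = \exp_x(\tau v/|v|)$ departs $F_{s_0}$ orthogonally at $\tau=0$, so by the defining property of a manifold submetry it is horizontal for all $\tau$; in particular $\pi\circ c$ is a geodesic in $X$ of length $|v|$ joining $\gamma(s_0)$ to $\gamma(t_0)$. By strong convexity of $B(\gamma(s_0),\varepsilon)$, such a geodesic is unique, so $\pi\circ c = \gamma\big|_{[s_0,t_0]}$, forcing $|v|=t_0-s_0$ and identifying $c$ as the unique horizontal lift $\tilde{\gamma}_x\big|_{[s_0,t_0]}$. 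Hence $y=\tilde{\gamma}_x(t_0)$ and $p(y)=x=H_{t_0,s_0}(y)$, as claimed. Consequently $H_{t_0,s_0}$ is smooth near $\tilde{\gamma}_0(t_0)$, and since $H_{s_0,t_0}$ is its inverse between equidimensional smooth submanifolds—and the defining map of $p$ has everywhere-invertible differential from the tubular diffeomorphism—$H_{s_0,t_0}$ is smooth as well.

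For general $t_0$, pick a subdivision $s_0=u_0<u_1<\cdots<u_N=t_0$ fine enough that the local argument applies on each adjacent pair $(u_i,u_{i+1})$; then $H_{s_0,t_0}=H_{u_{N-1},u_N}\circ\cdots\circ H_{u_0,u_1}$ is a composition of $C^\infty$ diffeomorphisms. The one genuinely delicate point is the uniqueness step $\pi\circ c = \gamma\big|_{[u_i,u_{i+1}]}$: this is the reason for insisting on subintervals contained in a strongly convex ball of $X$, since otherwise the horizontal geodesic $c$ could a priori project to a different geodesic connecting the same endpoints of $X$ and the identification with the holonomy lift would fail.
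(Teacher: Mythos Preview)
Your overall strategy matches the paper's: identify $H$ locally with a smooth map built from the normal exponential map of a fiber, then compose along a subdivision. The paper phrases the local map as the time--$(r_2-r_1)$ gradient flow of $\mathrm{dist}_{F_{s_i}}$, which is the same object as your foot-point projection.

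There is, however, a genuine gap in your local identification. The claim that a small ball $B(\gamma(s_0),\varepsilon)\subset X$ is strongly convex ``because $X$ is a finite-dimensional Alexandrov space'' is false in general: if $\pi:\mathbb{S}^2\to\mathbb{S}^2/\mathbb{Z}_2$ is the quotient by the rotation of angle $\pi$ about an axis, the base carries two cone points of total angle $\pi$, no ball about such a point is strongly convex (two points equidistant from the apex on opposite sides are joined by two distinct minimizing geodesics), and a meridian of this spindle is a geodesic passing through both cone points---so your $\gamma(s_0)$ may sit exactly there. The repair is to reverse the identification and use uniqueness in $M$ rather than in $X$, which is what the paper's gradient-flow formulation does implicitly: given $y\in F_{t_0}\cap W$, start from the unique lift $\tilde\gamma_y$ with $\tilde\gamma_y(t_0)=y$ supplied by the preceding proposition, shrink $\varepsilon$ so that $\gamma$ is minimizing on subintervals of length $<\varepsilon$, and note that the submetry gives $\mathrm{dist}(y,F_{s_0})=\mathrm{dist}(\gamma(t_0),\gamma(s_0))=t_0-s_0$, so the horizontal segment $\tilde\gamma_y|_{[s_0,t_0]}$ realizes $\mathrm{dist}(y,F_{s_0})$. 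Inside the tubular neighborhood $W$ the minimal normal geodesic from $F_{s_0}$ to $y$ is unique, hence equals $\tilde\gamma_y|_{[s_0,t_0]}$, and $p(y)=\tilde\gamma_y(s_0)=H_{t_0,s_0}(y)$. No convexity in $X$ is needed.
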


\begin{proof}
Choose $\varepsilon _{0}>0$ so that $\left[ s-\varepsilon _{0},t+\varepsilon
_{0}\right] \subset \left( 0,b\right) .$ By compactness we cover $\left[ s,t%
\right] $ by finite number of open intervals of the form 
\begin{equation*}
\left( s_{i}-\iota _{i},s_{i}+\iota _{i}\right) ,
\end{equation*}%
were $\iota _{i}$ is one-fourth of the injectivity radius of $\pi
^{-1}\left( \gamma \left( s_{i}\right) \right) $, and 
\begin{equation*}
s-\varepsilon _{0}=s_{0}<s_{1}<\cdots <s_{m}=t+\varepsilon _{0}.
\end{equation*}

Let $F_{t}^{i}$ be the flow of $\mathop{\rm grad}\mathrm{dist}_{\pi
^{-1}\left( \gamma \left( s_{i}\right) \right) }.$ Then for $r_{1},r_{2}\in
\left( s_{i},s_{i+1}+\iota _{i+1}\right) ,$ $H_{r_{1},r_{2}}$ is the
restriction of $F_{r_{2}-r_{1}}^{i}$ to $\pi ^{-1}\left( \gamma \left(
r_{1}\right) \right) $ and hence is a diffeomorphism onto its image $\pi
^{-1}\left( \gamma \left( r_{2}\right) \right) $. Since $H_{s,t}$ is the
composition of a finite number of the diffeomorphisms $H_{r_{1},r_{2}},$ it
follows that $H_{s,t}$ is a diffeomorphism.
\end{proof}

\begin{remark}
\label{holo rem}For $\gamma $ and $\tilde{\gamma}_{x}$ as above, we define
the holonomy fields along $\tilde{\gamma}_{x}$ to be the Jacobi fields that
correspond to variations by lifts of $\gamma .$ If the Lagrangian subspace $%
\Lambda _{\pi ^{-1}\left( \gamma \left( s\right) \right) }$ has no
singularities on $\left( s,t\right) ,$ that is, if the evaluation map $%
\mathcal{E}_{u}:\Lambda _{\pi ^{-1}\left( \gamma \left( s\right) \right)
}\longrightarrow T_{\gamma \left( u\right) }M$ is one-to-one for all $u\in
\left( s,t\right) $, it follows that a field $J\in \Lambda _{\pi ^{-1}\left(
\gamma \left( s\right) \right) }$ is holonomy if $J\left( u\right) \in T\pi
^{-1}\left( \gamma \left( u\right) \right) $ for some $u\in \left(
s,t\right) .$
\end{remark}

\subsection{Submetries and Variational Conjugate Points\label{Subm and conj
pts}}

The following is the precise sense in which the term \textquotedblleft
conjugate point\textquotedblright\ is used in Theorem \ref{submetry cor}.

\begin{definition}
\label{variat conj dfn}(Variational Conjugate Point) Let $\gamma :\left[ 0,b%
\right] \longrightarrow X$ be a unit speed geodesic in a complete, locally
compact length space $X.$ We say that $\gamma \left( b\right) $ is
variationally conjugate to $\gamma \left( 0\right) $ along $\gamma $ if and
only if for some $\varepsilon >0,$ there is a continuous map $V:\left[ 0,b%
\right] \times \left( -\varepsilon ,\varepsilon \right) \longrightarrow X$
with the following properties.

\noindent 1. For all $t\in \left( 0,b\right) ,$ 
\begin{equation*}
\gamma \left( t\right) =V\left( t,0\right) .
\end{equation*}%
\noindent 2. There is a $C>0$ and a $t_{0}\in \left( 0,b\right) $ so that
for all sufficiently small $s\neq 0$,%
\begin{equation*}
\mathrm{dist}\left( \gamma \left( t_{0}\right) ,V\left( t_{0},s\right)
\right) \geq Cs.
\end{equation*}

\noindent 3. For each $s\in \left( -\varepsilon ,\varepsilon \right) ,$ 
\begin{equation*}
t\mapsto V\left( t,s\right)
\end{equation*}%
is a unit speed geodesic on $\left[ 0,b\right] .$

\noindent 4. At the end points, 
\begin{eqnarray*}
\mathrm{dist}\left( V\left( 0,0\right) ,V\left( 0,s\right) \right) &\leq
&o\left( s\right) \text{ and} \\
\mathrm{dist}\left( V\left( b,0\right) ,V\left( b,s\right) \right) &\leq
&o\left( s\right) .
\end{eqnarray*}
\end{definition}

In the Riemannian case, this coincides with the usual definition of
conjugacy, so it is not surprising that geodesics in Alexandrov spaces stop
minimizing distance after variational conjugate points.

\begin{proposition}
\label{conj not min prop}If $X$ is an Alexandrov space with curvature
bounded from below and $\gamma \left( b\right) $ is variationally conjugate
to $\gamma \left( 0\right) $ along $\gamma ,$ then for all $\varepsilon >0,$
either $\gamma $ does not extend to $\left[ 0,b+\varepsilon \right] $ or $%
\gamma |_{\left[ 0,b+\varepsilon \right] }$ is not minimal.
\end{proposition}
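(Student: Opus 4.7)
\emph{Proof proposal.} I will argue by contradiction: suppose $\gamma|_{[0,b+\varepsilon]}$ is a minimal geodesic of length $b+\varepsilon$ for some $\varepsilon>0$. The plan is to exploit the geodesic variation $V$ from the hypothesis to force $d(V(t_0,s),\gamma(t_0))=o(s)$, contradicting condition~2.

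First, since $\gamma$ extends past $b$ as a minimizing geodesic, $\gamma(b)$ is not a cut point of $\gamma(0)$; in particular, $\gamma|_{[0,b]}$ is the unique minimizing geodesic from $\gamma(0)$ to $\gamma(b)$. Combining condition~4 with the triangle inequality yields
\[
b-o(s)\;\leq\;d(V(0,s),V(b,s))\;\leq\;\mathrm{length}(V(\cdot,s))=b,
\]
so $V(\cdot,s)$ is a geodesic whose length agrees with the distance between its endpoints up to $o(s)$. For $s$ sufficiently small, the uniqueness of the nearby minimizer $\gamma|_{[0,b]}$ forces $V(\cdot,s)$ itself to be (globally) minimizing between its endpoints.

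The main step is a quantitative stability estimate for minimizing geodesics near $\gamma$: I claim
\[
d(V(t,s),\gamma(t))\;\leq\;C_{b}\bigl(d(V(0,s),\gamma(0))+d(V(b,s),\gamma(b))\bigr)\;=\;o(s)
\]
uniformly for $t\in[0,b]$, where $C_b$ depends only on $b$ and the lower curvature bound. I would derive this by applying Toponogov's hinge comparison at both endpoints of $\gamma$: the angles at $\gamma(0)$ and $\gamma(b)$ formed by the hinges built from $\gamma$ and the shortest segments $[\gamma(0)V(0,s)]$, $[\gamma(b)V(b,s)]$ are controlled by the comparison geometry in the model space, and propagating these bounds along $\gamma$ via the non-cut property yields the linear displacement estimate. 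Taking $t=t_0$ then contradicts $d(V(t_0,s),\gamma(t_0))\geq Cs$.

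The main obstacle is establishing this linear rate. A naive argument using only the triangle-inequality deficit at the interior point $V(t_0,s)$ gives a displacement bound of order $o(\sqrt{s})$ (the excess in an Alexandrov triangle is only quadratic in the distance from a side, as in the Euclidean model), which is not strong enough to contradict the linear lower bound $Cs$. The improvement to $o(s)$ requires the simultaneous use of the hinges at \emph{both} endpoints together with the uniqueness coming from the non-cut property; this two-sided comparison is the technical heart of the argument.
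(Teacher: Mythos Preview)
Your proposal has a genuine gap precisely where you flag it: the linear stability estimate $d(V(t,s),\gamma(t)) = o(s)$ is asserted but not proven, and your outline (``hinges at both endpoints together with uniqueness'') does not supply it. Two specific problems arise. First, nothing in the hypotheses gives an extension of $\gamma$ past $t=0$, so a symmetric two--sided hinge argument is not available. Second, your claim that $V(\cdot,s)$ is globally minimizing for small $s$ is unjustified: uniqueness of the minimizer $\gamma|_{[0,b]}$ constrains nearby \emph{minimizers}, not arbitrary nearby unit--speed geodesics of length $b$, and a curve whose length exceeds the distance between its endpoints by $o(s)$ need not be a shortest path.

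The paper's argument supplies exactly the missing mechanism, and it uses the extension past $b$ far more directly than you do. Rather than appealing to an abstract stability result, the paper works at the single vertex $V(b,s)$ and uses the point $\gamma(b+\eta)$ on the minimizing extension as a pivot. Since $V(b,s)$ lies within $o(s)$ of the interior point $\gamma(b)$ of the minimal segment $\gamma|_{[0,b+\eta]}$, the comparison angles $\tilde\sphericalangle\bigl(\gamma(0),V(b,s),\gamma(b+\eta)\bigr)$ and $\tilde\sphericalangle\bigl(\gamma(t_0),V(b,s),\gamma(b+\eta)\bigr)$ are both close to $\pi$; Toponogov then forces the corresponding actual angles at $V(b,s)$ to be close to $\pi$ as well. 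A comparison in the space of directions at $V(b,s)$ (curvature $\geq 1$, hence perimeter $\leq 2\pi$) makes the directions toward $\gamma(0)$ and toward $\gamma(t_0)$ close to each other, and a final hinge comparison in $X$ gives $d(V(t_0,s),\gamma(t_0))=o(s)$, contradicting condition~2. The conceptual point you are missing is that the extension is not merely a non--cut--point condition: the extra point $\gamma(b+\eta)$ is what allows one to compare all the relevant directions at $V(b,s)$ against a common reference, bypassing any need for a general geodesic--stability theorem.
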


\begin{proof}
Suppose that $\gamma |_{\left[ 0,b+\eta \right] }$ is minimal and that $\eta 
$ is small enough so that $t_{0}\in \left( \eta ,b-\eta \right) .$ Since the
comparison angle $\tilde{\sphericalangle}\left( \gamma \left( 0\right)
,\gamma \left( b\right) ,\gamma \left( b+\eta \right) \right) $ is $\pi ,$
it follows that 
\begin{eqnarray*}
\sphericalangle \left( \gamma \left( 0\right) ,V\left( b,s\right) ,\gamma
\left( b+\eta \right) \right) &\geq &\tilde{\sphericalangle}\left( \gamma
\left( 0\right) ,V\left( b,s\right) ,\gamma \left( b+\eta \right) \right)
>\pi -o\left( \frac{s}{\eta }\right) \text{,} \\
\sphericalangle \left( \gamma \left( t_{0}\right) ,V\left( b,s\right)
,\gamma \left( b+\eta \right) \right) &\geq &\tilde{\sphericalangle}\left(
\gamma \left( t_{0}\right) ,V\left( b,s\right) ,\gamma \left( b+\eta \right)
\right) >\pi -o\left( \frac{s}{\eta }\right) .
\end{eqnarray*}

The previous two inequalities, together with a hinge comparison argument in
the \emph{space of directions} of $X$ at $V\left( b,s\right) ,$ gives 
\begin{equation*}
\sphericalangle \left( \gamma \left( 0\right) ,V\left( b,s\right) ,\gamma
\left( t_{0}\right) \right) \leq o\left( \frac{s}{\eta }\right) .
\end{equation*}%
So by hinge comparison in $X,$%
\begin{equation*}
\mathrm{dist}\left( V\left( t_{0},s\right) ,\gamma \left( t_{0}\right)
\right) \leq o\left( \frac{s}{\eta }\right) ,
\end{equation*}%
but this is contrary to Part 2 of the definition of variational conjugacy.
\end{proof}

\begin{lemma}
\label{focal implies conj}Let $\pi :M\longrightarrow X$ be a manifold
submetry. Let $\gamma :\left[ 0,b\right] \longrightarrow X$ be a geodesic,
and let $\tilde{\gamma}$ be a horizontal lift of $\gamma $ that has its
first focal point for $\pi ^{-1}\left( \gamma \left( 0\right) \right) $ at $%
b_{0}\in \left( 0,b\right) .$ Then $\gamma $ has a variational conjugate
point at $b_{0}.$
\end{lemma}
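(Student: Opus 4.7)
The plan is to use the first focal point at $b_{0}$ to produce a nontrivial $J \in \Lambda_{\pi^{-1}(\gamma(0))}$ with $J(b_{0})=0$, realize $J$ as the variation field of a family of horizontal, unit-speed geodesics in $M$ starting orthogonally to the fiber $\pi^{-1}(\gamma(0))$, and then project this family down via $\pi$ to obtain the required variation $V$ in $X$.

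Concretely, using the decomposition (\ref{dfn of Lambda_N}), I would pick a smooth curve $c:(-\varepsilon,\varepsilon)\to \pi^{-1}(\gamma(0))$ and a smooth family $w(s)$ of unit horizontal vectors along $c$ so that the variation field of $\tilde V(t,s)=\exp_{c(s)}(tw(s))$ along $\tilde\gamma$ is exactly $J$. By the defining property of a manifold submetry, since each $\tilde V(\cdot,s)$ leaves $\pi^{-1}(\gamma(0))$ perpendicularly, it stays horizontal, so $V(t,s)=\pi(\tilde V(t,s))$ is a unit speed geodesic in $X$ for every $s$. This yields properties 1 and 3 of Definition \ref{variat conj dfn}. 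Since $\tilde V(0,s)=c(s)\in \pi^{-1}(\gamma(0))$, we have $V(0,s)=\gamma(0)$, settling the first half of property 4; and since $J(b_{0})=0$, a Taylor expansion gives $\dist(\tilde V(b_{0},0),\tilde V(b_{0},s))=O(s^{2})$, so the $1$-Lipschitz property of $\pi$ yields $\dist(V(b_{0},0),V(b_{0},s))\le o(s)$, giving the second half of property 4.

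The remaining and most delicate point, property 2, reduces to exhibiting some $t_{0}\in(0,b_{0})$ at which the horizontal component of $J$ is nonzero, since then $|\partial_{s}V(t_{0},0)|=|\pi_{*}J(t_{0})|>0$ and one may take $C=|\pi_{*}J(t_{0})|/2$. I would argue by contradiction: if $J$ were vertical on all of $(0,b_{0})$, then because $b_{0}$ is the first focal point the subspace $\Lambda_{\pi^{-1}(\gamma(0))}$ is nonsingular on $(0,b_{0})$, and Remark \ref{holo rem} forces $J$ to be a holonomy field, i.e.\ the variation field of a family of horizontal lifts of $\gamma$.

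The main obstacle I anticipate is ruling out that a nontrivial holonomy field can vanish at $b_{0}$, since Proposition \ref{Hol diffeos} is stated only on the open interval. The idea would be: if the lifts $\tilde\gamma_{x(s)}$ meet at $\tilde\gamma(b_{0})$ to first order, then $J(b_{0})=0$; moreover, because the horizontal lift of $\gamma'(b_{0})$ at each point of $\pi^{-1}(\gamma(b_{0}))$ is unique, the velocities $\tilde\gamma_{x(s)}'(b_{0})$ agree to first order as well, so $J'(b_{0})=0$, and then $J\equiv 0$ by uniqueness of Jacobi fields, contradicting $J\not\equiv 0$. Since $b_{0}$ lies in the interior of $[0,b]$, $\gamma$ extends past $b_{0}$ and the uniqueness of horizontal lifts from Part 2 of the proposition opening Subsection \ref{submetries and Holn sect} remains available, so the argument goes through up to and including $t=b_{0}$.
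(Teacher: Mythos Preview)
Your approach is correct and essentially identical to the paper's: build a geodesic variation $\tilde V$ in $M$ whose variation field is the focal Jacobi field $J\in\Lambda_{\pi^{-1}(\gamma(0))}$ with $J(b_0)=0$, project by $\pi$, and verify the four conditions of Definition~\ref{variat conj dfn}, with Property~2 handled by showing $J$ cannot be everywhere vertical on $(0,b_0)$ via Remark~\ref{holo rem}.

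The only point worth flagging is that the ``main obstacle'' you anticipate is not actually present. By hypothesis $b_0\in(0,b)$, so $b_0$ already lies in the open interval on which Proposition~\ref{Hol diffeos} is stated. Hence for any $u\in(0,b_0)$ the holonomy map $H_{u,b_0}$ is a diffeomorphism between the fibers, and a holonomy field $J$ satisfies $J(b_0)=(dH_{u,b_0})(J(u))$; since $J(u)\neq 0$ (no focal points before $b_0$) and $dH_{u,b_0}$ is an isomorphism, $J(b_0)\neq 0$ follows immediately. This is exactly how the paper argues. Your alternative route through uniqueness of horizontal lifts and $J'(b_0)=0$ is plausible but would require an additional smoothness argument for the horizontal--lift assignment $p\mapsto \tilde\gamma_p'(b_0)$; it is simply unnecessary here.
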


\begin{proof}
Since $\tilde{\gamma}$ has its first focal point for $\pi ^{-1}\left( \gamma
\left( 0\right) \right) $ at $b_{0},$ there is a variation 
\begin{equation*}
\tilde{V}:\left[ 0,b_{0}\right] \times \left( \varepsilon ,\varepsilon
\right) \longrightarrow M
\end{equation*}%
of $\tilde{\gamma}$ by geodesics that leave $\pi ^{-1}\left( x\right) $
orthogonally at time $0$ with%
\begin{equation}
\tilde{V}\left( 0,s\right) \in \pi ^{-1}\left( x\right) ,\text{ }\left. 
\frac{\partial }{\partial s}\tilde{V}\right\vert _{\left( b_{0},0\right) }=0%
\text{ and }\left. \frac{\partial }{\partial s}\tilde{V}\right\vert _{\left(
t,0\right) }\neq 0  \label{s deriv of V tilde}
\end{equation}%
for all $t\in \left( 0,b_{0}\right) .$ If $\frac{\partial }{\partial s}%
\tilde{V}\left( t,0\right) $ is vertical for all $t\in \left( 0,b_{0}\right)
,$ then by Remark \ref{holo rem}, $\tilde{V}$ is a holonomy field. In this
event, since $b_{0}\in \left( 0,b\right) $, it follows from Proposition \ref%
{Hol diffeos} that 
\begin{equation*}
\left. \frac{\partial }{\partial s}\tilde{V}\right\vert _{\left(
b_{0},0\right) }\neq 0,
\end{equation*}%
which is contrary to the second equation in \eqref{s deriv of V tilde}. So
for some $t_{0}\in \left( 0,b_{0}\right) $,%
\begin{equation}
\left. \frac{\partial }{\partial s}\tilde{V}\right\vert _{\left(
t_{0},0\right) }\text{ is not vertical}.  \label{not vert st}
\end{equation}

Projecting $\tilde{V}$ under $\pi $ produces a variation $V$ of $\gamma $ in 
$X$ by geodesics. It follows from \eqref{not vert st} that for all
sufficiently small $s\neq 0,$ there is a $C>0$ so that 
\begin{equation*}
\mathrm{dist}\left( \gamma \left( t_{0}\right) ,V\left( t_{0},s\right)
\right) \geq Cs.
\end{equation*}

Since 
\begin{eqnarray*}
\tilde{V}\left( 0,s\right) &\in &\pi ^{-1}\left( x\right) , \\
\mathrm{dist}\left( \tilde{V}\left( b_{0},0\right) ,\tilde{V}\left(
b_{0},s\right) \right) &\leq &o\left( s\right) ,
\end{eqnarray*}%
and $\pi $ is distance nonincreasing$,$ 
\begin{eqnarray*}
V\left( 0,0\right) &=&x\text{ and} \\
\mathrm{dist}\left( V\left( b_{0},0\right) ,V\left( b_{0},s\right) \right)
&\leq &o\left( s\right) .
\end{eqnarray*}%
Thus $\gamma $ has a variational conjugate point at time $b_{0}.$
\end{proof}

\begin{proof}[Proof of Theorem \protect\ref{submetry cor}]
Suppose that 
\begin{equation*}
\pi :M\longrightarrow X
\end{equation*}%
is a manifold submetry of a complete Riemannian $n$--manifold with $\Ric%
_{k}\geq k$ and that for some $x\in X,$ $\dim \pi ^{-1}\left( x\right) \geq
k.$ Let $\gamma $ be a geodesic of $X$ emanating from $x.$ Suppose that $%
\gamma $ extends to an interval $I$ that properly contains $\left[ -\frac{%
\pi }{2},\frac{\pi }{2}\right] .$ Then $\gamma $ is defined on $\left[ -%
\frac{\pi }{2},\frac{\pi }{2}\right] $ and either extends past $\frac{\pi }{2%
}$ or extends past $-\frac{\pi }{2}.$ Without loss of generality, assume
that $\gamma $ extends past $\frac{\pi }{2}.$ By Part 1 of Theorem \ref%
{Intermeadiate Ricci Thm}, every horizontal lift of $\gamma $ has its first
focal point for $\pi ^{-1}\left( x\right) $ at some $t_{0}\in \left[ -\frac{%
\pi }{2},\frac{\pi }{2}\right] .$ If $\tilde{\gamma}$ is such a lift and $%
t_{0}\in \left( -\frac{\pi }{2},\frac{\pi }{2}\right] ,$ then by Lemma \ref%
{focal implies conj}, $\gamma \left( t_{0}\right) $ is variationally
conjugate to $\gamma \left( 0\right) .$ If $t_{0}=-\frac{\pi }{2},$ then for
convenience, we reorient $\gamma $ so that it extends past $-\frac{\pi }{2}$
and has its first focal point at $\frac{\pi }{2}.$ Applying Part 2 of Lemma %
\ref{sing soon Ric_k Lemma} with $\kappa =1,$ $t_{0}=0,$ $t_{\max }=\frac{%
\pi }{2},$ $\tilde{\lambda}=\cot \left( t+\frac{\pi }{2}\right) $, and 
\begin{equation*}
W_{t_{0}}=\left\{ J|J\left( 0\right) \in T_{\tilde{\gamma}\left( 0\right)
}\pi ^{-1}\left( x\right) \text{ and }J^{\prime }\left( 0\right) =\mathrm{S}%
_{\tilde{\gamma}^{\prime }\left( 0\right) }J\left( 0\right) \right\} ,
\end{equation*}%
we see that $W_{t_{0}}$ is spanned by Jacobi fields of the form $\sin \left(
t+\frac{\pi }{2}\right) E,$ where $E$ is a parallel field. In particular, $%
\mathrm{S}_{\tilde{\gamma}^{\prime }\left( 0\right) }\equiv 0.$ So we can
apply Part 1 of Lemma \ref{sing soon Ric_k Lemma} and conclude that $\tilde{%
\gamma}$ also has a focal point at $s_{0}\in \left[ -\frac{\pi }{2},0\right)
.$ Since $\gamma $ extends past $-\frac{\pi }{2},$ by Theorem \ref{focal
implies conj}, $\gamma \left( s_{0}\right) $ is variationally conjugate to $%
\gamma \left( 0\right) $.

If all geodesics emanating from $x\in X$ extend to $\left[ -\frac{\pi }{2},%
\frac{\pi }{2}\right] $ and are free of variational conjugate points on $%
\left( -\frac{\pi }{2},\frac{\pi }{2}\right) $, then by Lemma \ref{focal
implies conj}, $\pi ^{-1}\left( x\right) $ has focal radius $\geq \frac{\pi 
}{2}.$ So if $\dim \pi ^{-1}\left( x\right) \geq k,$ then by Part 3 of
Theorem \ref{Intermeadiate Ricci Thm}, the universal cover of $M$ is
isometric to the sphere or a to projective space with the standard metrics,
and $\pi ^{-1}\left( x\right) $ is totally geodesic in $M.$

To prove Part 3 of Theorem \ref{submetry cor}, suppose that $p,q\in X$ are
at maximal distance $>\frac{\pi }{2},$ and $\dim \pi ^{-1}\left( p\right)
\geq k.$ Since $M$ is a compact Riemannian manifold and 
\begin{equation*}
\pi :M\longrightarrow X
\end{equation*}%
is a submetry, $X$ is an Alexandrov space with some lower curvature bound.
Since $p$ and $q$ are at maximal distance, $\pi ^{-1}\left( p\right) $ and $%
\pi ^{-1}\left( q\right) $ are at maximal distance. It follows that for any $%
\tilde{p}\in \pi ^{-1}\left( p\right) ,$ 
\begin{equation*}
\Uparrow _{\tilde{p}}^{\pi ^{-1}\left( q\right) }\equiv \left\{ \left. 
\tilde{v}\in \nu _{\tilde{p}}^{1}\left( \pi ^{-1}\left( p\right) \right) 
\text{ }\right\vert \text{ }\gamma _{\tilde{v}}\text{ is a segment from }%
\tilde{p}\text{ to }\pi ^{-1}\left( q\right) \right\}
\end{equation*}%
is a $\frac{\pi }{2}$--net in $\nu _{\tilde{p}}^{1}\left( \pi ^{-1}\left(
p\right) \right) .$ Let $W$ be any $k$--dimensional subspace of $T_{\tilde{p}%
}\pi ^{-1}\left( p\right) .$ For $\tilde{v}\in \nu _{\tilde{p}}^{1}\left(
\pi ^{-1}\left( p\right) \right) $ and $\left\{ E_{i}\right\} _{i=1}^{k}$ an
orthonormal basis for $W,$%
\begin{equation*}
\mathrm{Trace}\left( \left. S_{\tilde{v}}\right\vert _{W}\right)
=\left\langle -\displaystyle\sum\limits_{i=1}^{k}\mathrm{II}\left(
E_{i},E_{i}\right) ,\tilde{v}\right\rangle .
\end{equation*}%
Since $\Uparrow _{\tilde{p}}^{\pi ^{-1}\left( q\right) }$ is a $\frac{\pi }{2%
}$--net in $\nu _{\tilde{p}}^{1}\left( \pi ^{-1}\left( p\right) \right) ,$
it follows that for some $\tilde{v}\in \Uparrow _{\tilde{p}}^{\pi
^{-1}\left( q\right) },$ 
\begin{equation}
\mathrm{Trace}\left( \left. S_{\tilde{v}}\right\vert _{W}\right) \leq 0\text{%
. \label{shape gamma inequal}}
\end{equation}

Let $\Lambda _{\pi ^{-1}\left( q\right) }$ be the Lagrangian family of
Jacobi fields along $\gamma _{\tilde{v}}$ that correspond to variations by
geodesics that leave $\pi ^{-1}\left( x\right) $ orthogonally at time $0.$
Then Inequality \eqref{shape gamma inequal} combined with Lemma \ref{sing
soon Ric_k Lemma} gives us that $\gamma _{\tilde{v}}$ has a focal point in $%
\left[ 0,\frac{\pi }{2}\right] .$ As before, it follows that either $\pi
\circ \gamma _{\tilde{v}}$ does not extend to an interval that properly
contains $\left[ 0,\frac{\pi }{2}\right] ,$ or $\pi \circ \gamma _{\tilde{v}%
} $ has a variational conjugate point in $\left[ 0,\frac{\pi }{2}\right] .$
Since $\pi \circ \gamma _{\tilde{v}}$ is a minimal geodesic from $p$ to $q$
and $\mathrm{dist}\left( p,q\right) >\frac{\pi }{2},$ the former case is
excluded. The latter case implies, via Proposition \ref{conj not min prop},
that for all $\varepsilon >0$, $\pi \circ \gamma _{\tilde{v}}|_{\left[ 0,%
\frac{\pi }{2}+\varepsilon \right] }$ is not minimal, so it is also contrary
to our hypothesis that $\mathrm{dist}\left( p,q\right) >\frac{\pi }{2}.$
\end{proof}

\begin{remarknonum}
By Theorem 1 of \cite{ProWilh}, $X$ need not have positive Ricci curvature,
even when $\pi $ is a Riemannian submersion. So neither the first nor third
conclusion of Theorem \ref{submetry cor} follow from the Bonnet-Myers
Theorem.
\end{remarknonum}

\begin{remarknonum}
The sense in which $\gamma $ has a conjugate point can be described via
variations. (See Definition \ref{variat conj dfn} below.) There are also
various notions of conjugacy in length spaces proposed by Shankar and
Sormani in \cite{ShankSorm}. Our variational notion is more readily
adaptable to the situation of Theorem \ref{submetry cor} than are any of
those in \cite{ShankSorm}. All of the definitions have the common feature
that $\gamma $ stops minimizing after a conjugate point.
\end{remarknonum}

\begin{remarknonum}
By results in \cite{GrvGrm1} and \cite{Wilk}, the possibilities for $\pi $
in Part 2 of Theorem \ref{submetry cor} can be listed, if $\pi $ is a
Riemannian submersion. More generally, Riemannian foliations of round
spheres are classified if they are either nonsingular (\cite{LyWilk}) or
they are singular and have fiber dimension $\leq 3$ (\cite{Rad2}). However,
the singular Riemannian foliations of round spheres have not been
classified, and there is an abundance of examples (\cite{Rad}).
\end{remarknonum}

The version of Theorem \ref{submetry cor} when $k=1$ yields, via a
different proof, the inequality statements of Chen and Grove in Theorems A
and B in \cite{ChenGrv}, with the additional information about the behavior
of geodesics from conclusion 1. In particular, if $\pi $ is a Riemannian
submersion, it follows that the conjugate radius of $X$ is $\leq \frac{\pi }{%
2}.$ For a Riemannian submersion $\pi :M^{n+k}\longrightarrow B^{n}$ with
the sectional curvature of $M$ $\geq 1,$ Theorem A of \cite{GonGui} gives
that 
\begin{equation*}
\pi \frac{n-1}{k+n-1}\geq \mathrm{conj}\left( B\right) .
\end{equation*}%
In particular, the conjugate radius of $B$ is $\leq \frac{\pi }{2}$ if $%
k\geq n-1.$ (Cf also Corollary 1.2 of \cite{GonGui}.)

Theorem \ref{submetry cor} has the following consequence for cohomogeneity
one actions.

\begin{corollary}
\label{cohom 1 cor}Let $M$ be a complete Riemannian manifold $M$ with $\Ric%
_{k}\geq k$. If $G\times M\to M$ is an isometric, cohomogeneity one action
with a singular orbit of dimension $\geq k,$ then the following hold.

\noindent 1. The diameter of $M/G$ is $\leq \frac{\pi }{2}.$

\noindent 2. If the diameter of $M/G$ is $\frac{\pi }{2},$ then the
universal cover of $M$ is isometric to the sphere or a projective space with
the standard metrics, and the singular orbits are totally geodesic in $M.$
\end{corollary}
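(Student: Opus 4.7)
The plan is to apply Theorem \ref{submetry cor} to the orbit projection $\pi: M \longrightarrow M/G$, which is a manifold submetry because the action is isometric with closed orbits. Since the action has cohomogeneity one, $M/G$ is a one-dimensional Alexandrov space, and the existence of a singular orbit rules out $M/G$ being a circle, so $M/G$ is an interval. Let $x \in M/G$ be the point corresponding to the singular orbit of dimension $\geq k$; being an endpoint of an interval, $x$ satisfies $\max\{\mathrm{dist}_x\} = \mathrm{diam}(M/G)$. Applying Part 3 of Theorem \ref{submetry cor} then yields $\mathrm{diam}(M/G) \leq \pi/2$, which is Part 1.

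For Part 2, assume $\mathrm{diam}(M/G) = \pi/2$. I would show that the focal radius of $\pi^{-1}(x)$ equals $\pi/2$ and then invoke Part 3 of Theorem \ref{Intermeadiate Ricci Thm}. Let $\tilde{\gamma}$ be any horizontal unit-speed geodesic in $M$ leaving $\pi^{-1}(x)$ orthogonally at $t = 0$. Its projection $\pi \circ \tilde{\gamma}$ is a minimal unit-speed geodesic from $x$ to the opposite endpoint of $M/G$, defined and minimizing on $[0, \pi/2]$. If $\tilde{\gamma}$ had a focal point for $\pi^{-1}(x)$ at some $b_0 \in (0, \pi/2)$, then Lemma \ref{focal implies conj} would produce a variational conjugate point for $\pi \circ \tilde{\gamma}$ at $b_0$, and Proposition \ref{conj not min prop} would force $\pi \circ \tilde{\gamma}$ to fail to minimize beyond $b_0$, contradicting that it minimizes up to $\pi/2$. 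Hence the focal radius of $\pi^{-1}(x)$ is $\geq \pi/2$; combined with the upper bound from Part 1 of Theorem \ref{Intermeadiate Ricci Thm}, it equals $\pi/2$. Part 3 of that theorem then yields that the universal cover of $M$ is a round sphere or a standard projective space and that $\pi^{-1}(x)$ is totally geodesic. Any other singular orbit, being at the opposite endpoint of $M/G = [0, \pi/2]$ and hence at maximal distance $\pi/2$ from $x$, is handled by the same focal-radius argument together with the rigidity of the ambient model.

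The main obstacle I anticipate is the preliminary step of pinning down that the singular orbit really sits at an endpoint realizing $\max\{\mathrm{dist}\} = \mathrm{diam}$, and of verifying that the projected horizontal geodesic from $\pi^{-1}(x)$ genuinely minimizes on the full interval $[0, \pi/2]$ until it reaches the opposite singular orbit. Both points are standard consequences of the structure of cohomogeneity one quotients, but they are the bridge that allows Theorems \ref{submetry cor} and \ref{Intermeadiate Ricci Thm} to take over and deliver the rigidity conclusions.
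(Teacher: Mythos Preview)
Your proposal is correct and follows the route the paper intends: the paper states the corollary as a direct consequence of Theorem \ref{submetry cor} without giving a separate proof, and your argument---Part 3 of Theorem \ref{submetry cor} for the diameter bound, then Lemma \ref{focal implies conj}, Proposition \ref{conj not min prop}, and Theorem \ref{Intermeadiate Ricci Thm} for the rigidity---is exactly the mechanism behind that theorem specialized to the cohomogeneity one setting. One small point worth sharpening: the \emph{other} singular orbit need not have dimension $\geq k$, so the ``same focal-radius argument'' does not apply to it directly; its total geodesy really does come, as you indicate, from the ambient rigidity (in a round sphere or standard projective space the set at distance $\pi/2$ from a totally geodesic submanifold is again totally geodesic), and you should make that dependence explicit rather than suggesting the focal-radius step repeats.
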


Part 3 of Theorem \ref{submetry cor} has the following corollary.

\begin{corollary}
\label{coh sing orb cor}Let $M$ be a complete Riemannian manifold $M$ with $%
\Ric_{k}\geq k$. If $G\times M\to M$ is an isometric group action, the
diameter of $M/G$ is $>\frac{\pi }{2},$ and $x$ is a point that realizes the
diameter of $M/G,$ then $\dim \pi ^{-1}\left( x\right) $ $\leq k-1.$

In particular, if $G\times M\longrightarrow M$ is as above and is also a
cohomogeneity one action$,$ then both singular orbits have dimension $\leq
k-1.$
\end{corollary}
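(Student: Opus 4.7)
The plan is to derive Corollary \ref{coh sing orb cor} as a direct contrapositive consequence of Part 3 of Theorem \ref{submetry cor}, using only elementary facts about $1$-dimensional Alexandrov spaces for the cohomogeneity-one addendum.

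First I would verify that the orbit projection $\pi \colon M \longrightarrow M/G$ fits the framework of Theorem \ref{submetry cor}, i.e.\ that it is a manifold submetry. Because the action is isometric (and, as is standard for these results, implicitly proper so that orbits are closed embedded submanifolds), each fiber $\pi^{-1}(x)$ is a smooth closed submanifold of $M$. The orthogonality-preservation property required in the definition is the classical observation that a geodesic meeting one orbit perpendicularly is perpendicular to every orbit it meets, which follows from the fact that $G$-equivariant vector fields tangent to the orbits are Killing fields and hence have inner product with a geodesic velocity that is constant along the geodesic.

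Next, for the main statement, I would simply read Part 3 of Theorem \ref{submetry cor} contrapositively. That part says: if $\dim \pi^{-1}(x) \geq k$ and $\max\{\mathrm{dist}_x\} = \mathrm{diam}(M/G)$, then $\mathrm{diam}(M/G) \leq \tfrac{\pi}{2}$. Under our hypotheses $\mathrm{diam}(M/G) > \tfrac{\pi}{2}$ and $x$ realizes the diameter, so the premise $\dim \pi^{-1}(x) \geq k$ must fail, giving $\dim \pi^{-1}(x) \leq k-1$.

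For the cohomogeneity-one clause, I would note that $M/G$ is a $1$-dimensional Alexandrov space. Since $\mathrm{diam}(M/G) > \tfrac{\pi}{2} > 0$, and the existence of a singular orbit is equivalent to $M/G$ having an endpoint, the presence of singular orbits forces $M/G$ to be an interval (a segment, half-line, or ray-like object). In any such interval, each endpoint lies at maximal distance from the other endpoint, so both endpoints satisfy $\max\{\mathrm{dist}_x\} = \mathrm{diam}(M/G)$. Applying the first part at each endpoint yields that both singular orbits have dimension $\leq k-1$.

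I do not anticipate a genuine obstacle: the only mildly delicate point is confirming the manifold-submetry status of $\pi$, which is standard and already used implicitly in Corollary \ref{cohom 1 cor}; after that the argument is purely a contrapositive plus the combinatorics of diameter-realizing points in a $1$-dimensional quotient.
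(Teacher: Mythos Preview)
Your proposal is correct and matches the paper's approach exactly: the paper introduces this corollary with the single sentence ``Part 3 of Theorem \ref{submetry cor} has the following corollary,'' and your argument simply unpacks that contrapositive together with the standard fact (already asserted before Theorem \ref{submetry cor}) that orbit projections of isometric actions are manifold submetries. The only superfluous detail is your mention of half-lines and rays for the cohomogeneity-one quotient, since $Ric_k\geq k$ forces $M$ to be compact and hence $M/G$ to be a closed interval, but this does not affect the validity of the argument.
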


\begin{remarknonum}
The sectional curvature case of Corollary \ref{coh sing orb cor} can be
inferred from Corollary 2.7 of \cite{ChenGrv}.
\end{remarknonum}

Examples $D$ and $E$ of \cite{GuijWilh} show that the hypothesis about the
dimensions of the submanifolds can not be removed from Theorem \ref{submetry
cor}.

\end{document}